\newtheorem{thm}{Theorem}[section]
\newtheorem{cor}[thm]{Corollary}
\newtheorem{lem}[thm]{Lemma}
\newtheorem{prop}[thm]{Proposition}
\theoremstyle{definition}
\newtheorem{defn}[thm]{Definition}
\newtheorem{rem}[thm]{Remark}
\numberwithin{equation}{section}
\begin{document}

\subjclass[2010]{53C17, 58J50, 41A60, 35K08}
\keywords{subriemannian geometry, sublaplacian, heat kernel, heat invariants, asymptotics}

\title{Heat kernel asymptotics for quaternionic contact manifolds}

\author{ A. Laaroussi}

\thanks{The author has been supported by the priority program SPP 2026 {\it geometry at infinity} of Deutsche Forschungsgemeinschaft (project number BA 3793/6-1).}

\address{Abdellah Laaroussi \endgraf
 Institut f\"{u}r Analysis, Leibniz Universit\"{a}t \endgraf
Welfengarten 1, 30167 Hannover, Germany\endgraf
}
\email{abdellah.laaroussi@math.uni-hannover.de}

\begin{abstract}
In this paper, we study the heat kernel associated to the intrinsic sublaplacian on a quaternionic contact manifold considered as a subriemannian  manifold. More precisely, we explicitly compute the first two coefficients $c_0$ and $c_1$ appearing in the small time asymptotics expansion of the heat kernel on the diagonal. We show that the second coefficient $c_1$ depends linearly on the qc scalar curvature $\kappa$. Finally we apply our results to compact qc-Einstein manifolds and prove the spectral invariance of geometric quantities in the subriemannian setting.
\end{abstract}
\maketitle
\tableofcontents
\thispagestyle{empty}
 \section{Introduction}
Let $(M,\mathcal{H},\langle\cdot,\cdot\rangle)$ be a subriemannian (SR) manifold, i.e. $M$ is a smooth connected orientable manifold endowed with a bracket generating subbundle $\mathcal{H}$ of the tangent bundle $TM$. Moreover,  $\langle\cdot,\cdot\rangle$ denotes a family of inner products on $\mathcal{H}$ which smoothly vary with the base point. \ \\

Quaternionic contact (qc) manifolds were introduced by Biquard in \cite{Biquard} and are examples of fat SR manifolds \cite{BariIva}. A qc manifold ($M,\mathcal{H},\langle\cdot,\cdot\rangle$) is a $(4n+3)$-dimensional connected manifold endowed with a codimension $3$ distribution $\mathcal{H}$ and a fiberwise inner product $\langle\cdot,\cdot\rangle$ on $\mathcal{H}$. Locally, the distribution $\mathcal{H}$ is given as the kernel of three contact forms $\eta_1,\eta_2,\eta_3$ and there are almost complex structures $I_1,I_2,I_3$ on $\mathcal{H}$ satisfying the quaternionic commutation relations
$$ (I_{i})^2=I_1I_2I_3=-\text{Id for }i=1,2,3.$$
Moreover, the following compatibility conditions hold
$$ 2\langle I_{i}\cdot,\cdot\rangle=d\eta_i(\cdot,\cdot)\text{ on }\mathcal{H}.$$
A classical example of a qc manifold is given by the quaternionic Hopf fibration
$$\mathbb{S}^3\hookrightarrow\mathbb{S}^{4n+3} \rightarrow \mathbb{HP}^n.$$
Here the distribution $\mathcal{H}_q$ at $q\in\mathbb{S}^{4n+3}$ is defined as the orthogonal complement of the vertical  space (tangent space to the fiber through $q$) with respect to the standard Riemannian metric on $\mathbb{S}^{4n+3}$. This standard qc structure on $\mathbb{S}^{4n+3}$ was intensively studied by many authors from the subriemannian geometric point of view \cite{Baud_Wang,BauLaa,MaMo12}. Moreover, this example is included in the sub-class of {\it $3$-Sasakian }manifolds.\ \\

On a qc manifold, there exists a linear connection adapted to the qc structure, the so-called Biquard connection \cite{Biquard} . The Biquard connection plays a similar role as the Levi-Civita connection in Riemannian geometry or the Tanaka-Webster connection on CR manifolds \cite{Tanaka,Webster}.\ \\

Any regular SR structure on $M$ induces a hypoelliptic sublaplacian $\Delta_{sub}$ which intrinsically is defined based on the Popp measure constuction \cite{ABGF,Bariz}. From an analytical point of view on may study the diffusion on $M$ generated by the heat operator induced by $\Delta_{sub}$. A classical problem is to find relations between  analytical invariants (e.g. coefficients appearing in the small time asymptotics of the heat kernel on the diagonal) and geometric invariants of the underlying structure, like in the Riemannian case \cite{ Berger, Gilkey, Rosenberg}.\ \\

In \cite{Bena, Verd} the asymptotic of the heat kernel on the diagonal was proved in the subriemannian case, i.e. the  heat kernel $p(t,\cdot,\cdot)$ induced by the (intrinsic) sublaplacian $\Delta_{sub}$ has the following expansion for small times:
$$p(t,q,q)=\frac{1}{t^{Q/2}}\left(c_0(q)+c_1(q)t+\cdots\right).$$
Here $Q$ denotes the Hausdorff dimension of the metric space $(M,d)$ where $d$ is the subriemannian distance (Carnot Carath\'eodory distance) on $M$.
 As an example, the first coefficient $c_0$ in this expansion can be directly calculated with help of  the heat kernel of the nilpotentization of the structure \cite{Verd}.  However, the explicit geometric meaning of the remaining coefficients  is unknown in the general case. In \cite{Bari}, using the nilpotent approximation, it was shown in the case of a $3$D contact SR manifold, that the second coefficient $c_1$ can be identified with some invariant defined on $3$D contact SR manifolds. Similar results hold for strictly pseudoconvex CR manifolds endowed with a Levi-metric \cite{BGS}. Therein, by developing an appropriate pseudodifferential calculus, the authors showed that the second coefficient $c_1$ can be interpreted as the scalar curvature of the Tanaka-Webster connection.\ \\
 
In this paper, we are interested in the analysis of the intrinsic sublaplacian on qc manifolds. Our goal is to find a relation between the second coefficient $c_1(q)$  and geometric invariants of the underlying SR structure.  We first calculate the Popp measure on such a qc manifold and determine the intrinsic sublaplacian. Moreover, applying recent results due to Y.C. de Verdi\'{e}re, L. Hillairet and E. Tr\'{e}lat in \cite{Verd} combined with the existence of (qc) normal coordinates on qc manifolds \cite{Kunkel}, we show that the second coefficient $c_1$  appearing in the small time asymptotics of the heat kernel associated to the intrinsic sublaplacian depends linearly on the qc scalar curvature. As an application of this result, we prove that the dimension, the Popp volume and the qc scalar curvature of a qc-Einstein compact manifold are spectral invariants.\ \\

The paper is organized as follows:  Section \ref{subgeo} provides basic concepts and definitions in subriemannian geometry. In Section \ref{qc_manifolds} we recall the definition of  qc manifolds and we list some of their properties. Then we compute the Popp volume and the intrinsic sublaplacian induced by a qc structure in Section \ref{Poppmea}. In Section \ref{spectral_invariants} we compute the first coefficients in the small time asymptotics of the heat kernel by using an approximation method  \cite{Chpo,Verd} and the qc normal coordinates \cite{Kunkel}.

 \section{Subriemannian geometry}\label{subgeo}
 We start recalling the basic definitions in subriemannian geometry \cite{ABB,Mo,S1,S2}.\ \\
 
 A subriemannian manifold is a triple $(M,\mathcal{H},\langle \cdot,\cdot\rangle)$ where
 \begin{itemize}
 \item[(a)] $M$ is a connected orientable smooth manifold of dimension $n\geq 3$.
 \item[(b)] $\mathcal{H}$ is a smooth distribution of constant rank $m<n$ which is bracket generating, i.e. if we set for $j\geq 1$
 $$\mathcal{H}^{1} :=\mathcal{H}\text{ and } \mathcal{H}^{j+1}:=\mathcal{H}^{j}+[\mathcal{H},\mathcal{H}^{j}],$$
 then for all $q\in M$ there is $N(q)\in\mathbb{N}$ such that $\mathcal{H}^{N(q)}_{q}=T_qM.$
 \item[(c)] $\langle \cdot,\cdot\rangle$ is a fiber inner product on $\mathcal{H}$, i.e.
 $$\langle \cdot,\cdot\rangle_q:\mathcal{H}_q\times\mathcal{H}_q\longrightarrow\mathbb{R}$$
  is an inner product for all $q\in M$ and it smoothly varies with $q\in M$.
 \end{itemize}

We say that a subriemannian manifold $(M,\mathcal{H},\langle \cdot,\cdot\rangle)$ is regular, if for all $j\geq 1$ the dimension of $\mathcal{H}^{j}_q$ does not depend on the point $q\in M$. Furthermore, $M$ is said to be of step $r$ if $r$ is the smallest integer such that $\mathcal{H}^r=TM$.\ \\
In this work we only consider regular subriemannian manifolds of step $2$. Therefore, in what follows, we recall the required concepts only for this class of manifolds.\ \\
A local  frame $\{X_1,\cdots,X_m,X_{m+1},\cdots,X_n\}$ is called {\it adapted}, if  the vector fields 
$X_1,\cdots,X_m$ form a local orthonormal frame of 
 $(\mathcal{H}, \langle \cdot, \cdot\rangle )$.\ \\

On a subriemannian manifold the definition of a sublaplacian requires the data of a smooth measure. Let $\mu$ be a smooth measure on $M$. We denote by $\text{div}_\mu$ the divergence operator associated with the measure $\mu$, defined by
$$\mathcal{L}_X\mu=\text{ div}_\mu(X)\mu$$
for every smooth vector field $X$ on $M$. Then we can associate to $\mu$ a sublaplacian $\Delta_{sub}^\mu$ defined as the hypoelliptic, second order differential operator
$$\Delta_{sub}^\mu f:=-\text{div}_\mu\left(\nabla_\mathcal{H} f\right)\text{ for }f\in C^\infty(M).$$
Here $\nabla_\mathcal{H}$ denotes the horizontal gradient with respect to the horizontal metric $\langle\cdot,\cdot\rangle$ on $\mathcal{H}$ (see \cite{ABGF,Bariz} for more details).
Since by assumption the subriemannian manifold $M$ is regular, there is a canonical choice of smooth measure on $M$, namely the Popp measure. In this case, the sublaplacian defined from the Popp measure is called the intrinsic sublaplacian.\ \\
 Note that the sublaplacian is positive and if  the manifold $M$ endowed with the subriemannian distance is complete then $\Delta_{sub}^\mu$ is essentially selfadjoint on compactly supported smooth functions and has a unique selfadjoint extension on $L^2(M,\mu)$ (see \cite{Verd}). Therefore the heat semigroup $\left(e^{-t\Delta_{sub}^\mu}\right)_{t>0}$ is a well-defined one-parameter family of bounded operators on $L^2(M,\mu)$. In the following, we denote by $p(t,\cdot,\cdot)$ the heat kernel of the operator $e^{-t\Delta_{sub}^\mu}$ which is smooth due to the hypoellipticity of $\frac{d}{dt}+\Delta_{sub}^\mu$.\ \\
 We recall the following formula for the small time asymptotic expansion for the heat kernel on the diagonal \cite{Bena,Verd}: for all $N\in\mathbb{N}$ and $q\in M$,
 $$p(t,q,q)=\frac{1}{t^{Q/2}}\left(c_0(q)+c_1(q)t+\cdots+c_N(q)t^N+o(t^N)\right)\text{ as }t\to 0. $$
 Moreover, when assuming regularity of the subriemannian manifold, the functions $c_i$ are smooth in a neighborhood of $q$. Here $Q$ denotes the Hausdorff dimension of the metric space $(M,d)$ where $d$ denotes the subriemannian distance (Carnot Carath\'eodory distance) on $M$.
 \section{Quaternionic contact manifolds}\label{qc_manifolds}
 Qc manifolds have been introduced by Biquard in \cite{Biquard}. We recall  the definition of such a manifold.
 \begin{defn}
 A quaternionic contact manifold $(M,\mathcal{H},\langle\cdot,\cdot\rangle)$ is a $(4n+3)$-dimensio-\\
 nal connected manifold $M$ together with a corank $3$ distribution $\mathcal{H}$ and a fiberwise inner product $\langle\cdot,\cdot\rangle$ on $\mathcal{H}$ such that
 \begin{enumerate}
 \item $\mathcal{H}$ is given locally as the kernel of an $\mathbb{R}^3$-valued $1$-form $\eta=(\eta_1,\eta_2,\eta_3)$:
 \begin{equation}\label{ker}
 \mathcal{H}=\bigcap_{i=1}^{3}\text{Ker}(\eta_i).
 \end{equation}
 \item There are three almost complex structures $I_1,I_2$ and $I_3$ on $\mathcal{H}$ that satisfy the quaternionic commutation relations:
 \begin{equation}\label{almost}
 (I_{i})^2=I_1I_2I_3=-\text{Id for }i=1,2,3.
 \end{equation}
 Furthermore, the following compatibility conditions hold:
 \begin{equation}\label{compatibility}
 2\langle I_{i}X,Y\rangle=d\eta_i(X,Y)
 \end{equation}
 for all horizontal vector fields $X,Y\in\mathcal{H}$ and $i=1,2,3$.
 \end{enumerate}
 \end{defn}
 Note that the choice of $1$-forms $\eta_1,\eta_2,\eta_3$ and almost complex structures $I_1,I_2,I_3$ with the above properties (\ref{ker})-(\ref{compatibility}) is not unique. If $\psi\in SO(3)$ then $\psi(\eta)$ and $\psi(I)$ with $I:=(I_1,I_2,I_3)$ as well satisfy the above relations \cite{IMV}. Hence we have a $2$-sphere bundle of almost complex structures over $M$ (locally) given by
 $$\mathbb{I}:=\{aI_1+bI_2+cI_3:a^2+b^2+c^2=1\}.$$ 
 An important fact is that the contact forms $\eta_1,\eta_2,\eta_3$ and the almost complex structures $I_1,I_2,I_3$ uniquely determine the metric $\langle\cdot,\cdot\rangle$ on $\mathcal{H}$. Note also that a qc manifold $(M,\mathcal{H},\langle\cdot,\cdot\rangle)$ is a regular SR manifold of step two \cite{BariIva}.\ \\
 
 The presence of the three almost complex structures and their relation to the metric $\langle\cdot,\cdot\rangle$ on $\mathcal{H}$ provides an $Sp(n)Sp(1)$-structure on the horizontal distribution $\mathcal{H}$. By an $Sp(n)Sp(1)$-frame $\{X_1,\cdots,X_{4n}\}$ we mean an orthonormal frame of the distribution $\mathcal{H}$ such that
 $$I_i X_{4k+1}=X_{4k+i+1}\text{ for }k=0,\cdots,n-1\text{ and }i=1,2,3.$$
 In the following we denote by $X_\mathcal{H}$ (resp. $X_\mathcal{V}$) the orthogonal projection of a vector field $X$ onto $\mathcal{H}$ (resp. $\mathcal{V}$).\ \\
 On a qc manifold with $n\geq 2$ there exists a canonical connection defined by Biquard. In \cite{Biquard} it is shown that there is a unique connection $\nabla$ with torsion $T$ and a unique complementary subspace $\mathcal{V}$ to $\mathcal{H}$ in $TM$ such that
 \begin{enumerate}
 \item $\nabla$ preserves the decomposition $\mathcal{H}\oplus\mathcal{V}$ and the $Sp(n)Sp(1)$-structure on $\mathcal{H}$:
 $$\nabla \langle\cdot,\cdot\rangle=0\text{ and }\nabla\sigma\in\Gamma(\mathbb{I})\text{ for }\sigma\in\Gamma(\mathbb{I}).$$
 \item The torsion $T$ on $\mathcal{H}$ fulfils
 $$T(X,Y)=-[X,Y]_{\mathcal{V}}\text{ for }X,Y\in\mathcal{H}.$$
 \item For $V\in\mathcal{V}$, the endomorphism 
 \begin{align*}
 T(V,\cdot):\mathcal{H}&\longrightarrow\mathcal{H}\\
  X&\longmapsto T(V,X)_\mathcal{H}
 \end{align*}
 lies in $\left(sp(n)\oplus sp(1)\right)^\perp\subset gl(4n)$.
 \item There is a natural identification $\varphi:\mathcal{V}\longrightarrow Sp(1)$ with $\nabla \varphi=0$. 
 \end{enumerate}
 This connection is known as the {\it Biquard connection}. Furthermore, the vertical distribution $\mathcal{V}$ is locally generated by the Reeb  vector fields $V_1,V_2,V_3$ defined by
\begin{equation}\label{reeb}
\eta_i(V_j)=\delta_{ij}\text{ and }(V_i\lrcorner d\eta_j)_\mathcal{H}=-(V_j\lrcorner d\eta_i)_\mathcal{H}\text{ for }i,j=1,2,3.
\end{equation}
Using the Reeb vector fields $V_1,V_2,V_3$ we extend the metric $\langle\cdot,\cdot\rangle$ on $\mathcal{H}$ to a Riemannian metric $g$ by requiring $\mathcal{H}\perp\mathcal{V}$ and 
$$g:=\langle\cdot,\cdot\rangle\oplus\left(\eta_1^2+\eta_2^2+\eta_3^2\right).$$
Note that neither the extended Riemannian metric nor the Biquard connection depend on the action of $SO(3)$ on $\mathcal{V}$.
\begin{rem}
In case of $n=1$, the conditions (\ref{reeb}) are not satisfied in general. Furthermore, it was shown in \cite{Duchemin}, that if we assume in addition the existence of Reeb vector fields as in (\ref{reeb}), then the existence of  a linear connection with similar properties as above is assured. Hence, by a qc manifold with $n=1$, we always mean a qc manifold with conditions (\ref{reeb}).
\end{rem}
Let us denote by $T$ (resp. $R$) the torsion (resp. curvature) tensor of the Biquard connection defined by
\begin{align*}
&T(X,Y):=\nabla_X Y-\nabla_Y X-[X,Y],\\
&R(X,Y)Z:=\nabla_X\nabla_Y Z-\nabla_Y \nabla_X Z-\nabla_{[X,Y]}Z,
\end{align*}
for smooth vector fields $X,Y$ and $Z$. In the following we denote by $\left\{\theta_1,\cdots\theta_{4n}\right\}$ the dual frame of a horizontal frame $\left\{X_1,\cdots,X_{4n}\right\}$.\ \\
It is occasionally convenient to have a notation for the entire frame $\{X_1,\cdots,X_{4n},V_1, \\ V_2,V_3\}$ and coframe $\{\theta_1,\cdots,\theta_{4n},\eta_1,\eta_2,\eta_3\}$. Therefore, we may refer to $V_i$ as $X_{4n+i}$ and $\eta_i$ as $\theta_{4n+i}$. Furthermore, in order to have a consistent index notation we will use different letters for different ranges of indices as follows:
$$a,b,c\in\{1,\cdots,4n+3\},\hspace{2mm}\alpha,\beta,\gamma,\delta\in\{1,\cdots,4n\},\hspace{2mm}i,j,k\in\{1,2,3\},$$
and $\overline{i}:=4n+i$, for $i=1,2,3$.
With this convention we set
$$I_{\alpha\beta}^{i}:=g(I_iX_\alpha,X_\beta),\hspace{2mm} T_{ab}^{c}:=\theta_c\left(T(X_a,X_b)\right)\text{ and }R_{abc}^{d}:=\theta_d\left(R(X_a,X_b)X_c\right).$$
\begin{defn}\label{defi} The quaternionic contact scalar curvature $\kappa$ is defined by
$$\kappa:=\sum_{\alpha,\beta=1}^{4n}R_{\alpha\beta\beta}^{\alpha}.$$
\end{defn}

 We recall the following identities proved in \cite{Biquard,IMV} which we shall need later:
\begin{prop}\label{identities}
It holds:
\begin{enumerate}
\item For $\alpha,\beta=1,\cdots,4n$ and $i=1,2,3$:
$$T_{\alpha\beta}^{\overline{i}}=-2I_{\alpha\beta}^{i};$$
\item $T_{\overline{i}\hspace{0,5mm}\overline{j}}^{\overline{i}}=0$ for $i,j=1,2,3;$
\item For $V\in\mathcal{V}$ and $I\in\mathbb{I}$:
$$\sum_{\alpha=1}^{4n}g(T(V,X_\alpha),IX_\alpha)=0;$$
\item For $i=1,2,3$:
$$\sum_{\alpha,\beta=1}^{4n}g(R(X_\alpha,I_iX_\alpha)I_iX_\beta,X_\beta)=\frac{2n\kappa}{n+2};$$
\item For $i=1,2,3$:
$$\sum_{\alpha,\beta=1}^{4n}g(R(X_\beta,I_iX_\alpha)X_\alpha,I_iX_\beta)=-\frac{n\kappa}{n+2}.$$
\end{enumerate}
\end{prop}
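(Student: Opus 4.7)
The plan is to attack the five identities in two groups. Identities (1)--(3) are torsion identities that follow essentially from the defining properties of the Biquard connection, while (4)--(5) are curvature traces and constitute the main obstacle.

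For (1), I write $T_{\alpha\beta}^{\overline{i}}=\eta_i(T(X_\alpha,X_\beta))$ and apply property (2) of the Biquard connection to reduce this to $-\eta_i([X_\alpha,X_\beta])$. Since $X_\alpha,X_\beta\in\ker\eta_i$, Cartan's formula gives $\eta_i([X_\alpha,X_\beta])=-d\eta_i(X_\alpha,X_\beta)$, and the compatibility condition (\ref{compatibility}) then converts this into the appropriate multiple of $I^i_{\alpha\beta}$ (the sign being fixed by the normalization of $d\eta_i$). For (3), property (3) of the Biquard connection states that the endomorphism $T(V,\cdot)\colon\mathcal{H}\to\mathcal{H}$ lies in $(sp(n)\oplus sp(1))^\perp$; since $I=aI_1+bI_2+cI_3\in\mathbb{I}$ is by construction an element of the $sp(1)$-factor acting on $\mathcal H$, the sum $\sum_\alpha g(T(V,X_\alpha),IX_\alpha)$ is precisely the Hilbert--Schmidt pairing of these two endomorphisms of $\mathcal H$, and hence vanishes. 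For (2), I expand $T(V_i,V_j)=\nabla_{V_i}V_j-\nabla_{V_j}V_i-[V_i,V_j]$ and use property (4), $\nabla\varphi=0$: this identifies $\mathcal V$ parallelly with the Lie algebra $sp(1)$, so the connection forms $\omega_k^\ell$ defined by $\nabla_XV_k=\omega_k^\ell(X)V_\ell$ are antisymmetric in $k,\ell$, and in particular $\eta_i(\nabla_XV_i)=0$. Combining this with Cartan's formula for $d\eta_i(V_i,V_j)$ and the Reeb relations (\ref{reeb}) yields $T^{\overline i}_{\overline i\,\overline j}=0$.

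The main obstacle is (4)--(5). The strategy is to combine the first Bianchi identity for the Biquard connection, which carries torsion and hence reads $\sum_{\mathrm{cyc}}R(X,Y)Z=\sum_{\mathrm{cyc}}\bigl((\nabla_XT)(Y,Z)+T(T(X,Y),Z)\bigr)$, with the fact that, because $\nabla$ preserves $\langle\cdot,\cdot\rangle$ and the $Sp(n)Sp(1)$-structure, each $R(X,Y)$ acts on $\mathcal H$ by an element of $sp(n)\oplus sp(1)$. Decomposing $\mathrm{End}(\mathcal H)$ into its irreducible $Sp(n)Sp(1)$-isotypical components, one projects the Ricci-type trace $\sum_{\alpha,\beta}g(R(X_\alpha,I_iX_\alpha)I_iX_\beta,X_\beta)$ onto the $sp(1)$ piece carried by $I_i$; the factor $\tfrac{n}{n+2}$ arises as the dimension ratio of that isotypical component inside $\mathrm{End}(\mathcal H)$. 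Identity (5) is then extracted from (4) by using the antisymmetries of $R$ (first pair from the definition, second pair from metric preservation) and the algebraic Bianchi identity to rewrite the contraction in (5) as a combination of (4) plus Bianchi correction terms. The most delicate step is tracking these correction terms, which contain expressions like $T(T(X_\alpha,I_iX_\alpha),\cdot)$ whose traces must be controlled using identities (1) and (3) already established above; here I would lean on the explicit qc-curvature identities developed in \cite{IMV}.
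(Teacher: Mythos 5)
First, a point of reference: the paper does not prove this proposition at all --- it is explicitly ``recalled'' from \cite{Biquard,IMV} and used as a black box. So there is no in-paper argument to compare against; your proposal has to stand on its own, and judged that way it is a mix of solid steps and genuine gaps. Items (1) and (3) are fine: for (3) the observation that $\sum_{\alpha}g(T(V,X_\alpha),IX_\alpha)$ is the trace pairing of $T(V,\cdot)_{\mathcal{H}}\in(sp(n)\oplus sp(1))^{\perp}$ against $I\in sp(1)$ is exactly the right (and complete) argument. For (1) the route via $T(X,Y)=-[X,Y]_{\mathcal{V}}$, Cartan's formula and \eqref{compatibility} is correct, but note that with the paper's stated conventions it produces $+2I^{i}_{\alpha\beta}$ (compare the computation of $b^{i}_{\alpha\beta}$ in Section \ref{Poppmea}, which gives $\eta_i([X_\alpha,X_\beta])=-2I^{i}_{\alpha\beta}$, hence $\eta_i(T(X_\alpha,X_\beta))=+2I^{i}_{\alpha\beta}$); the discrepancy with the stated sign is not something ``fixed by the normalization of $d\eta_i$'', so you should either flag it as a convention mismatch with \cite{IMV} or track it down.

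The genuine gaps are in (2) and (4)--(5). For (2), antisymmetry of the vertical connection forms only kills the term $\eta_i(\nabla_{V_j}V_i)$; the terms $\eta_i(\nabla_{V_i}V_j)=\omega_{\overline{j}\,\overline{i}}(V_i)$ and $\eta_i([V_i,V_j])$ survive, and the Reeb conditions \eqref{reeb} constrain only the \emph{horizontal} part of $V\lrcorner d\eta$, so they say nothing about $d\eta_i(V_i,V_j)$. What is actually needed is the structural result (from \cite{IMV}) that the vertical part of $T(V_i,V_j)$ is proportional to $V_k$ with $(ijk)$ a permutation of $(123)$, which is a computation, not a formal consequence of the axioms as you present it. For (4)--(5), the outline (Bianchi identity with torsion, $R(X,Y)|_{\mathcal{H}}\in sp(n)\oplus sp(1)$, isotypical decomposition) is the right circle of ideas, but the assertion that the factor $\tfrac{n}{n+2}$ ``arises as the dimension ratio of that isotypical component'' is not an argument --- this coefficient comes out of the explicit relations between the Ricci $2$-forms $\rho_i$, the torsion, and the scalar curvature established in \cite{IMV}, and your sketch ends by deferring to exactly those identities. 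In other words, for the two hardest items your proof reduces to the same citation the paper makes, without supplying the missing derivation.
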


\section{Popp measure and intrinsic sublaplacian}\label{Poppmea}
Let $\{X_1,\cdots,X_{4n},V_1,V_2,V_3\}$ be an orthonormal frame near $q\in M$  such that $\{X_1,\cdots,X_{4n}\}$ is an $Sp(n)Sp(1)$-frame. Then this frame is also an adapted frame for the SR structure $(M,\mathcal{H},\langle\cdot,\cdot\rangle)$. According to \cite{Bariz}, the associated Popp measure $\mathcal{P}$ can be expressed locally in the form 
$$\mathcal{P}=\frac{1}{\sqrt{\det{B}}}\theta_1\wedge\cdots\wedge\theta_{4n}\wedge\eta_1\wedge\eta_2\wedge\eta_3.$$
Here $B=(B_{ij})_{ij}$ is the $3\times 3$-matrix function locally defined near $q$ with coefficients given by
$$B_{ij}:=\sum_{\alpha,\beta=1}^{4n}b_{\alpha\beta}^{i}b_{\alpha\beta}^{j},$$
where $b_{\alpha\beta}^{i}$ are defined for $\alpha,\beta=1,\cdots,4n$ and $i=1,2,3$
 by
 $$b_{\alpha\beta}^{i}:=g([X_\alpha,X_\beta],V_i).$$
 Now by \eqref{compatibility} we can write:
 \begin{align*}
 b_{\alpha\beta}^{i}&=g([X_\alpha,X_\beta],V_i)\\
 &=\eta_i([X_\alpha,X_\beta])\\
 &=-d\eta_i(X_\alpha,X_\beta)\\
 &=-2g(I_iX_\alpha,X_\beta).
 \end{align*}
Hence it follows that
\begin{align*}
B_{ij}&=4\cdot\sum_{\alpha,\beta=1}^{4n}g(I_iX_\alpha,X_\beta)g(I_jX_\alpha,X_\beta)\\
&=4\cdot\sum_{\alpha=1}^{4n}g(I_iX_\alpha,I_jX_\alpha)\\
&=16n\delta_{ij}.
\end{align*}
In the last equality we used the skew-symmetry of the almost complex structures and the commutation relations \eqref{almost}. This shows that $B$ is a diagonal matrix:
$$B=16n\cdot\textup{Id}\in\mathbb{R}^{3\times 3}$$
and hence we obtain the following formula for the Popp measure:
\begin{lem}\label{measure} The Popp measure $\mathcal{P}$ for the quaternionic contact manifold $(M,\mathcal{H},\langle\cdot,\cdot\rangle)$ has the form
$$\mathcal{P}=\frac{1}{(16n)^{3/2}}d\sigma,$$
where $d\sigma:=\theta_1\wedge\cdots\wedge\theta_{4n}\wedge\eta_1\wedge\eta_2\wedge\eta_3$.
\end{lem}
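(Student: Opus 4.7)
The plan is to compute the normalization factor $1/\sqrt{\det B}$ in the local expression for the Popp measure that one finds in \cite{Bariz}, namely
\[
\mathcal{P}=\frac{1}{\sqrt{\det B}}\,\theta_1\wedge\cdots\wedge\theta_{4n}\wedge\eta_1\wedge\eta_2\wedge\eta_3,
\]
where $B=(B_{ij})$ is the $3\times 3$ Gram-type matrix formed from the structure coefficients $b_{\alpha\beta}^{i}=g([X_\alpha,X_\beta],V_i)$ of a chosen adapted orthonormal frame.

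First I would rewrite $b_{\alpha\beta}^{i}$ in terms of the almost complex structures. Since $\{X_1,\dots,X_{4n}\}$ is horizontal, we have $\eta_i([X_\alpha,X_\beta])=-d\eta_i(X_\alpha,X_\beta)$ by Cartan's formula, and the compatibility condition \eqref{compatibility} gives $d\eta_i(X_\alpha,X_\beta)=2g(I_iX_\alpha,X_\beta)$. Thus $b_{\alpha\beta}^{i}=-2g(I_iX_\alpha,X_\beta)$, and
\[
B_{ij}=\sum_{\alpha,\beta=1}^{4n}b_{\alpha\beta}^{i}b_{\alpha\beta}^{j}=4\sum_{\alpha,\beta=1}^{4n}g(I_iX_\alpha,X_\beta)\,g(I_jX_\alpha,X_\beta).
\]

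Next I would collapse the $\beta$-sum using that $\{X_\beta\}$ is an orthonormal basis of $\mathcal{H}$, giving $\sum_\beta g(I_iX_\alpha,X_\beta)g(I_jX_\alpha,X_\beta)=g(I_iX_\alpha,I_jX_\alpha)$. Now I would invoke the quaternionic commutation relations \eqref{almost} together with the skew-symmetry of each $I_k$: for $i=j$ one gets $g(I_iX_\alpha,I_iX_\alpha)=g(X_\alpha,X_\alpha)=1$, while for $i\neq j$ the composition $I_iI_j$ equals $\pm I_k$ for some $k\neq i,j$, and the skew-symmetry $g(I_kX_\alpha,X_\alpha)=0$ forces the trace to vanish. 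Summing over the $4n$ values of $\alpha$ yields $B_{ij}=16n\,\delta_{ij}$.

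Finally, $\det B=(16n)^3$, hence $\sqrt{\det B}=(16n)^{3/2}$, and the claimed formula for $\mathcal{P}$ follows. The main obstacle is essentially bookkeeping: one must be careful with signs in Cartan's formula (since only the horizontal part of the bracket contributes nontrivially through $\eta_i$) and verify the off-diagonal vanishing via the quaternionic relations. Once these small checks are in place, the computation is purely algebraic.
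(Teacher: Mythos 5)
Your proposal is correct and follows essentially the same route as the paper: rewrite $b_{\alpha\beta}^{i}=-2g(I_iX_\alpha,X_\beta)$ via Cartan's formula and the compatibility condition, collapse the $\beta$-sum by orthonormality to get $B_{ij}=4\sum_\alpha g(I_iX_\alpha,I_jX_\alpha)=16n\,\delta_{ij}$ using skew-symmetry and the quaternionic relations, and conclude $\sqrt{\det B}=(16n)^{3/2}$. The only difference is that you spell out the off-diagonal vanishing ($I_iI_j=\pm I_k$, trace of a skew operator) slightly more explicitly than the paper does.
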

In terms of the orthonormal frame $\{X_1,\cdots,X_{4n},V_1,V_2,V_3\}$, the intrinsic sublaplacian $\Delta_{sub}$ with respect to the Popp measure $\mathcal{P}$ can be expressed in the form (see \cite{Bariz})
\begin{equation}\label{sublaplacian}
\Delta_{sub}=-\left(\sum_{\alpha=1}^{4n}X_\alpha^2+\text{div}_{\mathcal{P}}(X_\alpha)X_\alpha\right).
\end{equation}

Here the divergence operator is defined in terms of the Lie derivative $L_{X_\alpha}$ by
$$L_{X_\alpha}(\mathcal{P})=\text{div}_{\mathcal{P}}(X_\alpha)\mathcal{P}.$$
The computation of the first-order term in the formula (\ref{sublaplacian}) is done like in the $3$D contact case  \cite{Bari}. For this we use the formula
$$L_X(\mu\wedge\nu)=L_X(\mu)\wedge\nu+\mu\wedge L_X(\nu)$$
for any vector field $X$ and differential forms $\mu$ and $\nu$ on $M$. With this in mind, we obtain
$$L_{X_\alpha}(\mathcal{P})=\frac{1}{(16n)^{3/2}}\sum_{a=1}^{4n+3}\theta_1\wedge\cdots\wedge\theta_{a-1}\wedge L_{X_\alpha}(\theta_a)\wedge\theta_{a+1}\wedge\cdots\wedge \theta_{4n+3}.$$
Since $\{\theta_1,\cdots,\theta_{4n+3}\}$ is a coframe, we write for $\alpha=1,\cdots,4n$ and $a=1,\cdots,4n+3$:
$$L_{X_\alpha}(\theta_a)=\sum_{b=1}^{4n+3}f_{\alpha ab}\theta_b.$$
The functions $f_{\alpha ab}$ can be calculated as follows
$$f_{\alpha ab}=L_{X_\alpha}(\theta_a)(X_b)=\theta_a([X_b,X_\alpha])=:c_{b\alpha}^{a}.$$
Hence we obtain
$$\text{div}_{\mathcal{P}}(X_\alpha)=\sum_{\beta=1}^{4n}\theta_\beta([X_\beta,X_\alpha])+\sum_{i=1}^{3}\eta_i([V_i,X_\alpha])=\sum_{a=1}^{4n+3}c_{a\alpha}^{a}.$$
\begin{lem}\label{sublap}
The intrinsic sublaplacian $\Delta_{sub}$ has the expression
$$\Delta_{sub}=-\left(\sum_{\alpha=1}^{4n}X_\alpha^2+\left(\sum_{a=1}^{4n+3}c_{a\alpha}^{a}\right)X_\alpha\right),$$
where $c_{a\alpha}^{a}=\theta_a([X_a,X_\alpha])$.
\end{lem}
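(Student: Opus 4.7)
The plan is to start from the general expression (\ref{sublaplacian}) for $\Delta_{sub}$ in terms of any adapted orthonormal frame, so that everything reduces to computing the single quantity $\text{div}_{\mathcal{P}}(X_\alpha)$ for each horizontal $X_\alpha$, $\alpha = 1,\ldots,4n$. The essential input here is Lemma \ref{measure}: in our $Sp(n)Sp(1)$-adapted frame, the Popp measure is a \emph{constant} multiple of $d\sigma = \theta_1\wedge\cdots\wedge\theta_{4n+3}$. Because the factor $(16n)^{-3/2}$ is constant, it cancels on both sides of the defining relation $L_{X_\alpha}(\mathcal{P}) = \text{div}_{\mathcal{P}}(X_\alpha)\,\mathcal{P}$, so the task reduces to computing $L_{X_\alpha}(d\sigma)$.

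Next, I would apply the Leibniz rule for the Lie derivative on wedge products, which gives a sum of $4n+3$ terms, each with exactly one factor $\theta_a$ replaced by $L_{X_\alpha}(\theta_a)$. Expanding $L_{X_\alpha}(\theta_a) = \sum_b f_{\alpha ab}\theta_b$ in the coframe and pairing with $X_b$ via the identity $(L_X\omega)(Y) = X(\omega(Y)) - \omega([X,Y])$, the constancy of $\theta_a(X_b) = \delta_{ab}$ forces
$$f_{\alpha ab} = -\theta_a([X_\alpha,X_b]) = \theta_a([X_b,X_\alpha]) = c_{b\alpha}^a.$$
Only the diagonal terms with $b=a$ then survive, because any other choice reinserts into the wedge a coframe factor already present, making that term vanish. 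Collecting these contributions yields $L_{X_\alpha}(d\sigma) = \bigl(\sum_{a=1}^{4n+3} c_{a\alpha}^a\bigr)\,d\sigma$, and substituting the resulting expression for $\text{div}_{\mathcal{P}}(X_\alpha)$ into (\ref{sublaplacian}) completes the proof.

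I do not anticipate any real obstacle: the argument is essentially index bookkeeping once the constancy of the Popp density has been established. Conceptually, what makes the final formula so clean is precisely the rigidity coming from the quaternionic commutation relations (\ref{almost}) together with the compatibility conditions (\ref{compatibility}), which in the proof of Lemma \ref{measure} forced $B = 16n\cdot\text{Id}$. Without that constancy one would pick up an additional first-order term of the form $X_\alpha(\log\sqrt{\det B})\,X_\alpha$; its absence is the only nontrivial point to keep track of and it is already handled by the previous lemma.
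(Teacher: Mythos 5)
Your proposal is correct and follows essentially the same route as the paper: both start from the general expression \eqref{sublaplacian}, use the constancy of the Popp density from Lemma \ref{measure}, apply the Leibniz rule for the Lie derivative on the wedge product, and identify $f_{\alpha ab}=\theta_a([X_b,X_\alpha])=c^a_{b\alpha}$ so that only the diagonal terms contribute to $\mathrm{div}_{\mathcal{P}}(X_\alpha)=\sum_a c^a_{a\alpha}$.
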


\section{First and second heat invariants}\label{spectral_invariants}
In the  setting of general subriemannian manifolds, a powerful method in the analysis of a sublaplacian is given by the so-called nilpotent approximation. The idea consists in an approximation of the subriemannian manifold at a given point by a nilpotent Lie group endowed with a left-invariant subriemannian structure. In the following, we briefly recall the relevant concepts. For more details we refer to \cite{Chpo,Verd}.\ \\

 Let $(M,\mathcal{H},\langle\cdot,\cdot\rangle)$ be a step two regular and complete subriemannian manifold of dimension $n$ and denote by 
$$ \{X_1,\cdots,X_m,X_{m+1},\ldots ,X_n\}$$ 
 a local adapted frame at $q\in M$. A system of local coordinates 
$$\psi: M \supset U_q\longrightarrow \mathbb{R}^n=\mathbb{R}^m\oplus\mathbb{R}^{n-m}$$
 is called {\it linearly adapted at $q$}  if 
$$\psi(q)=0 \hspace{2ex} \text{\it  and }\hspace{2ex} \psi_\ast(\mathcal{H}_q)=\mathbb{R}^m.$$
In a system of linearly adapted coordinates at $q$, we have a notion of nonholonomic orders $"ord"$ corresponding to the natural dilations $\delta_\lambda:\mathbb{R}^n\rightarrow\mathbb{R}^n$ defined  for $\lambda>0$ by
$$\delta_\lambda(x_1,\cdots,x_m,x_{m+1},\cdots,x_n):=(\lambda x_1,\cdots,\lambda x_m,\lambda^2 x_{m+1},\cdots,\lambda^2 x_n).$$
More precisely, we set: 
$$\textup{ord}(x_i):=\begin{cases}
1& \text{if } 1\leq i\leq m,\\
2& \text{if } m+1\leq i\leq n 
\end{cases}$$
and 
$$\textup{ord}\left(\frac{\partial}{\partial x_i}\right):=\begin{cases}
-1& \text{if } 1\leq i\leq m,\\
-2& \text{if } m+1\leq i\leq n.
\end{cases}$$
Using the infinitesimal generator $P$ of the action $\delta_\lambda$ on $\mathbb{R}^n$:
$$P:=\sum_{i=1}^{m}x_i\frac{\partial}{\partial x_i}+2\sum_{i=m+1}^{n}x_i\frac{\partial}{\partial x_i},$$
it is possible to define a homogeneous tensor field $\varphi$ with respect to the above dilations by the condition:
\begin{equation}\label{order}
L_P(\varphi)=l\varphi,
\end{equation}
where $l$ is the order of $\varphi$ and $L_P$ denotes the Lie derivative with respect to $P$. Furthermore, we denote by $\varphi^{(l)}$ the homogeneous part of order $l$ of a tensor field $\varphi$.\ \\
Every smooth vector field $X$ on $\mathbb{R}^n$ has an anisotropic expansion near $0$ of the form (see \cite{Chpo}):
$$X\simeq X^{(-2)}+X^{(-1)}+\cdots,$$
where $X^{(l)}$ is a polynomial vector field of order $l$, i.e. homogeneous of order $l$ with respect to the dilations $\delta_\lambda$.\ \\
In the following we need a special class of linearly adapted coordinates called {\it privileged coordinates}. These are linearly adapted coordinates at $q$ such that every vector field $\psi_\ast(X_i)$ for $1\leq i\leq m$, has an expansion near $0$ where all the homogeneous terms have orders greater or equal than $-1$:
\begin{equation}\label{asymp}
\psi_\ast(X_i)\simeq X_i^{(-1)}+X_i^{(0)}+\cdots.
\end{equation}
An example of privileged coordinates at $q\in M$ is given by the so-called {\it canonical coordinates of the first kind}  defined as the inverse of the local diffeomorphism:
$$(x_1,\cdots,x_n)\longmapsto \exp{(x_1X_1+\cdots+x_n X_n)}(q).$$
Here $X_1,\cdots,X_m,X_{m+1},\cdots,X_n$ is an adapted local frame at $q$. 
\vspace{1mm}\par 
Note that the vector fields $X_1^{(-1)},\cdots,X_m^{(-1)}$ on $\mathbb{R}^n$ generate a graded step two nilpotent Lie algebra $\mathfrak{g}(q)$ called the tangent Lie algebra  at $q$ (see \cite{Chpo}). Let us denote by $(\mathbb{G}(q),\ast)$ the corresponding step two nilpotent Lie group defined as follows. As a manifold we take $\mathbb{G}(q)=\mathfrak{g}(q)$, while the group law is given by
$$\xi_1\ast\xi_2:=\xi_1+\xi_2+\frac{1}{2}[\xi_1,\xi_2] \hspace{2ex} 
\textup{\it for} \hspace{2ex}  \xi_1,\xi_2\in\mathbb{G}(q).$$
\begin{defn}
Given a smooth measure $\mu$ on $M$, its {\it nilpotentization} at $q$  is a measure $\widetilde{\mu}_q$ on $\mathbb{G}(q)$ defined in the chart $\psi$  by
$$\widetilde{\mu}_q:=\lim_{\epsilon\to 0}\frac{1}{\epsilon^{Q}}\delta_\epsilon^\ast\mu.$$

Here the convergence is understood in the weak-$*$-topology of $C_c(M)^\prime$ and $Q$ denotes the Hausdorff dimension of the regular SR manifold $M$ (for more details see \cite{Verd}). Due to the regularity assumption of the SR manifold $M$, the measure $\widetilde{\mu}_q$ is in fact a left-invariant measure on $\mathbb{G}(q)$. Since $\mathbb{G}(q)$ is nilpotent and hence unimodular, the measure $\widetilde{\mu}^q$ is a Haar measure on $\mathbb{G}(q)$ (see \cite{Verd}).
\end{defn}
Now we recall the relation between the first heat invariant $c_0$ and the nilpotentization of the subriemannian manifold $M$. 
As was mentioned in Section \ref{subgeo}, the heat kernel $p(t,\cdot,\cdot)$  has an asymptotic expansion on the diagonal as $t\downarrow  0$ of the form 
$$p(t,q,q)=\frac{1}{t^{Q(q)/2}}\left(c_0(q)+c_1(q)t+\cdots+c_N(q)t^N+o(t^N)\right),$$
for all $N\in\mathbb{N}$ and $q\in M$. Here the (locally defined) smooth coefficients $c_i(q)$ are called {\it heat invariants} of the SR manifold $M$.
\vspace{1ex}\\
Let $p_{\mathbb{G}(q)}$ denote the heat kernel of the intrinsic sublaplacian 
$$\widetilde{\Delta}_{sub}:=\sum_{i=1}^{m}\left(X_i^{(-1)}\right)^2$$
on $\mathbb{G}(q)$ with respect to the Haar measure $\widetilde{\mu}_q$. 
 According to the results in \cite{Verd} the first heat invariant $c_0$ is given by 
\begin{equation}\label{firstheat}
c_0(q)=p_{\mathbb{G}(q)}(1,0,0).
\end{equation}
The interpretation of the second heat invariant $c_1(q)$ through the nilpotentization of the structure at $q$ is given as follows: The sublaplacian $\Delta_{sub}$ on $M$ can be expressed as
$$\Delta_{sub}=-\sum_{i=1}^{m}X_i^2+Y,$$
where $Y:=-\sum_{i=1}^{m}\text{div}(X_i)_{\mathcal{P}}X_i$. Now, we define the dilated sublaplacian $\Delta_{sub}^\epsilon$ by
$$\Delta_{sub}^\epsilon=-\sum_{i=1}^{m}(X_i^\epsilon)^2+\epsilon Y^\epsilon,$$
where $X^\epsilon$, for $X$ a horizontal vector field, is defined in the privileged coordinates by
$$X^\epsilon:=\epsilon \delta_\epsilon^\ast(X).$$
We consider the anisotropic expansion of the horizontal vector fields $X_i^\epsilon$ and $Y^\epsilon$ as $\epsilon\to 0$:
\begin{align*}
& X_i^\epsilon=X_i^{(-1)}+\epsilon X_i^{(0)}+\epsilon^2 X_i^{(1)}+\cdots\\
& Y^\epsilon=Y^{(-1)}+\epsilon Y^{(0)}+\epsilon^2 Y^{(1)}+\cdots
\end{align*}
Then the dilated sublaplacian $\Delta_{sub}^\epsilon$ has an expansion as $\epsilon\to 0$ of the form
$$\Delta_{sub}^\epsilon=-\left(\widetilde{\Delta}_{sub}+\epsilon\mathcal{P}_1+\epsilon^2\mathcal{P}_2+\cdots\right)$$
where $\mathcal{P}_1$ and $\mathcal{P}_2$ are second-order differential operators given by
\begin{align*}
&\mathcal{P}_1:=\sum_{i=1}^{m}\left(X_i^{(-1)}X_i^{(0)}+X_i^{(0)}X_i^{(-1)}\right)+Y^{(-1)}\\
& \mathcal{P}_2:=\sum_{i=1}^{m}X_i^{(-1)}X_i^{(1)}+\sum_{i=1}^{m}X_i^{(1)}X_i^{(-1)}+(X_i^{(0)})^2+Y^{(0)}.
\end{align*}
According to the results in \cite{Verd}, the second heat invariant $c_1(q)$ is given as a convolution integral by
\begin{equation}\label{second_heat_invariant}
c_1(q)=\int_{0}^{1}\int_{\mathbb{G}(q)}p_{\mathbb{G}(q)}(1-s,0,\xi)\mathcal{P}_2\left(p_{\mathbb{G}(q)}(s,\xi,0)\right)d\xi ds,
\end{equation}
where $\mathcal{P}_2$ acts on the second variable.\ \\
In case of a general subriemannian manifold (even step two), we do not know if we can express the second-order differential operator $\mathcal{P}_2$ in terms of geometric data of the underlying manifold at $q$, that is, if $c_1(q)$ has a geometric meaning. However, in the case of $3$D contact SR manifolds, such a result was established by D. Barilari in \cite{Bari}. Moreover, on a strictly pseudoconvex CR manifold with a Levi-metric a similar result holds \cite{BGS}. In both cases, a crucial tool for obtaining a geometric interpretation of $c_1$ was the construction of special privileged coordinates \cite{ACG,Lee}.\ \\
In the following, we consider  special coordinates on qc manifolds constructed in \cite{Kunkel}, the so-called {\it qc normal coordinates}. In these coordinates, by considering a special adapted frame at a given point $q$, we are able to express $\mathcal{P}_2$ as a second-order differential operator with polynomial coefficients in the torsion and curvature tensors of the Biquard connection.
\subsection{QC normal coordinates}
We recall the construction of the qc normal coordinates from \cite{Kunkel}.
\begin{thm}[\cite{Kunkel}]
Let $(M,\mathcal{H},\langle\cdot,\cdot\rangle)$ be a quaternionic contact manifold with Biquard connection $\nabla$ and the decomposition $TM=\mathcal{H}\oplus\mathcal{V}$. Let $q\in M$ and $X_q+V_q\in\mathcal{H}_q\oplus\mathcal{V}_q$. Consider the (geodesic) curve $\gamma_{X_q+V_q}$ starting at $q$ and satisfying
$$D_t^2\dot{\gamma}_{X_q+V_q}=0,\hspace{2mm}\dot{\gamma}_{X_q+V_q}(0)=X_q\text{ and }D_t\dot{\gamma}_{X_q+V_q}(0)=V_q.$$
Then there are neighborhoods $0\in U\subset T_qM$ and $q\in U_M\subset M$ such that the function 
\begin{align*}
\psi:U&\longrightarrow U_M\\
X_q+V_q&\longmapsto\gamma_{X_q+V_q}(1)
\end{align*}
is a diffeomorphism. Furthermore, the following scaling by $t$ holds:
$$\psi(tX_q+t^2V_q)=\gamma_{X_q+V_q}(t)$$
whenever either side is defined. Here $D_t$ denotes the covariant derivative along $\gamma_{X_q+V_q}$.
\end{thm}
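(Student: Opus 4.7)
The plan is to interpret $\psi$ as the time-one flow of a first-order ODE system whose initial data encode the prescribed triple $(\gamma(0),\dot\gamma(0),D_t\dot\gamma(0))$, and then exploit the built-in anisotropic homogeneity of this system to analyze $d\psi$ at the origin.

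First I would rewrite $D_t^2\dot\gamma=0$ as a first-order system on $TM\oplus TM$ for the triple $(\gamma,\dot\gamma,A)$ with $A:=D_t\dot\gamma$. Standard ODE theory then yields a unique smooth maximal solution depending smoothly on $(X_q,V_q)$, so for $(X_q,V_q)$ in a small enough neighborhood of $0$ in $\mathcal H_q\oplus\mathcal V_q$ the curve $\gamma_{X_q+V_q}$ is defined up to $t=1$. This makes $\psi$ a smooth map on a neighborhood $U$ of $0$. To get the scaling identity, I would fix $(X_q,V_q)$ and $t$, set $\sigma(s):=\gamma_{X_q+V_q}(ts)$, and compute $\dot\sigma(0)=tX_q$, $D_s\dot\sigma(0)=t^2V_q$, and $D_s^2\dot\sigma\equiv 0$; by uniqueness, $\sigma=\gamma_{tX_q+t^2V_q}$, and evaluating at $s=1$ yields $\psi(tX_q+t^2V_q)=\gamma_{X_q+V_q}(t)$.

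To prove $\psi$ is a local diffeomorphism it suffices, by the inverse function theorem, to check that $d\psi_0$ is invertible. The scaling identity instantly handles the horizontal direction: for $X_q\in\mathcal H_q$, $\psi(sX_q)=\gamma_{X_q}(s)$, hence $d\psi_0(X_q)=X_q$. For $V_q\in\mathcal V_q$, I would use a Jacobi-field style argument: set $\alpha(s,t):=\gamma_{sV_q}(t)$ so that $\alpha(0,\cdot)\equiv q$ and $J(t):=\partial_s\alpha|_{s=0}$ satisfies $d\psi_0(V_q)=J(1)$. Differentiating $D_t^2\dot\alpha=0$ in $s$ at $s=0$ and using $\dot\alpha|_{s=0}\equiv 0$ and $D_t\dot\alpha|_{s=0}\equiv 0$, every term containing a Christoffel symbol or curvature factor drops out, leaving $\partial_t^3 J=0$. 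The initial data $J(0)=0$, $\partial_t J(0)=0$, $\partial_t^2 J(0)=V_q$ then force $J(t)=\tfrac{t^2}{2}V_q$, so $d\psi_0(V_q)=\tfrac12 V_q$. Hence $d\psi_0=\mathrm{id}_{\mathcal H_q}\oplus\tfrac12\mathrm{id}_{\mathcal V_q}$ is an isomorphism.

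The main technical point is this last Jacobi-field reduction. One must verify that linearizing the genuinely third-order equation $D_t^2\dot\gamma=0$ about the constant geodesic $\alpha(0,\cdot)\equiv q$ really does collapse to $\partial_t^3 J=0$, which hinges on the observation that at $s=0$ both $\dot\alpha$ and $A=D_t\dot\alpha$ vanish identically, so every contribution from the torsion or curvature of the Biquard connection is killed to first order in $s$. Once this is in place, the scaling identity together with the inverse function theorem deliver both the diffeomorphism conclusion and the stated scaling by $t$.
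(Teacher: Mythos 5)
The paper does not prove this statement: it is imported verbatim from Kunkel's thesis \cite{Kunkel} and used as a black box, so there is no in-paper argument to compare against. Your proposal is a correct and self-contained proof, and it is the standard one for results of this type. The reduction to a first-order system for $(\gamma,\dot\gamma,D_t\dot\gamma)$ and the uniqueness-based reparametrization argument for the scaling identity are both sound, and you correctly handle the one genuinely delicate point: in local coordinates every term of $D_t^2\dot\gamma$ other than $\dddot\gamma$ carries at least one undifferentiated factor of $\dot\gamma$ or $\ddot\gamma$, so linearizing about the constant solution $\alpha(0,\cdot)\equiv q$ (where these vanish identically) kills all Christoffel, torsion and curvature contributions and leaves $\partial_t^3J=0$; the torsion of the Biquard connection plays no role here. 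Your conclusion $d\psi_0=\mathrm{id}_{\mathcal H_q}\oplus\tfrac12\,\mathrm{id}_{\mathcal V_q}$ is moreover consistent with the formulas the paper later quotes in Proposition \ref{Kunkel}, namely $\eta_i^{(2)}=\tfrac12 dz_i-I^i_{\alpha\beta}x_\alpha dx_\beta$ and $V_i^{(-2)}=2\,\partial/\partial z_i$, which encode exactly the factor $\tfrac12$ in the vertical direction. The only point worth making explicit is the existence of solutions up to $t=1$ for small data; this follows either from continuous dependence near the constant solution, as you indicate, or directly from the scaling identity (data $(tX_q,t^2V_q)$ reach time $1$ precisely when data $(X_q,V_q)$ reach time $t$).
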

Now, let $\{V_1,V_2,V_3\}$ be an oriented basis of $\mathcal{V}_q$ and $I_1,I_2,I_3$ be the associated almost complex structures at $q$. Choose an $Sp(n)Sp(1)$-frame $\{X_1,\cdots,X_{4n}\}$ of $\mathcal{H}_q$. Extending these vectors to be parallel along the geodesics (in the sense of the above theorem) starting at $q$, one obtains a smooth local frame of $TM=\mathcal{H}\oplus\mathcal{V}$. Let us consider the dual frame $\{\theta_1,\cdots,\theta_{4n},\eta_1,\eta_2,\eta_3\}$ of the frame $\{X_1,\cdots,X_{4n},V_1,V_2,V_3\}$. Furthermore, extend the almost complex structures by defining
$$I_i X_{4k+1}=X_{4k+i+1}\text{ for }k=0,\cdots,n-1\text{ and }i=1,2,3.$$
Then one obtains the  so-called special frame and co-frame at $q$. Now, the qc normal coordinates ($x_\alpha,z_i$) at $q$ are defined by composing the inverse of the map $\psi$ with the map 
\begin{align*}
\lambda:T_qM&\longrightarrow \mathbb{R}^{4n+3}\\
X&\longmapsto (\theta_1(X),\cdots,\theta_{4n}(X),\eta_1(X),\eta_2(X),\eta_3(X))^t.
\end{align*}
From now on, the torsion and curvature tensors $T,R$ are considered with respect to this special frame, co-frame and contact forms $\eta_1,\eta_2,\eta_3$. As we will see below, the qc normal coordinates at $q$ are privileged coordinates at $q$, when the qc manifold $M$ is considered as a SR manifold.\ \\
The most important fact is that the infinitesimal generator $$P=\sum_{\alpha=1}^{4n}x_\alpha\frac{\partial}{\partial x_\alpha}+2\sum_{i=1}^{3}z_i\frac{\partial}{\partial z_i}$$
of the action
\begin{align*}
\delta_t:\mathbb{R}^{4n+3}&\longrightarrow\mathbb{R}^{4n+3}\\
(x,z)&\longmapsto(tx,t^2z)
\end{align*}
can be expressed (in qc normal coordinates) in terms of the special frame $\{X_\alpha,V_i\}$ in the form
$$P=\sum_{\alpha=1}^{4n}x_\alpha X_\alpha+\sum_{i=1}^{3}z_i V_i.$$
Using this fact,  the  following expression of the homogeneous terms of the special co-frame in qc normal coordinates was obtained in \cite{Kunkel} (we omit the summation signs for repeated indices) :
\begin{prop}\label{Kunkel}
In the qc normal coordinates centered at $q\in M$, the low order homogeneous  terms of the special co-frame and connection $1$-forms $\omega_{ab}$ are:
\begin{enumerate}
\item $\eta_i^{(2)}=\frac{1}{2}dz_i-I^{i}_{\alpha\beta}x_\alpha dx_\beta,\hspace{2mm}\eta_i^{(3)}=0\text{ and }$
$$\eta_i^{(4)}=\frac{1}{4}\left(z_j \omega_{\overline{j}\hspace{0,5mm}\overline{i}}^{(2)}+T_{\overline{j}\hspace{0,5mm}\overline{k}}^{\overline{i}}(q)z_j\eta_k^{(2)}-2I^{i}_{\alpha\beta}x_\alpha\theta_\beta^{(3)}\right).$$
\item $\theta_\alpha^{(1)}=dx_\alpha,\hspace{2mm}\theta_\alpha^{(2)}=0\text{ and }$
$$\theta_\alpha^{(3)}=\frac{1}{3}\left(x_\beta\omega_{\beta\alpha}^{(2)}-T_{\overline{i}\gamma}^{\alpha}(q)x_\gamma\eta_i^{(2)}+T_{\overline{i}\beta}^{\alpha}(q)z_i\theta_\beta^{(1)}\right).$$
\item $\omega_{ab}^{(1)}=0\text{ and }\omega_{ab}^{(2)}=\frac{1}{2}R_{\alpha\beta a}^{b}(q)x_\alpha\theta_\beta^{(1)}$.
\end{enumerate}
Here the connection 1-forms $\omega_{ab}$ are defined by
$$
\nabla X_\alpha=\omega_{\alpha\beta}\otimes X_\beta,\hspace{1mm}\nabla V_{\overline{i}}=\omega_{\overline{i}\hspace{0,5mm}\overline{j}}\otimes V_{\overline{j}}\text{ and }\omega_{\alpha \overline{i}}=\omega_{\overline{i}\alpha}=0.
$$
\end{prop}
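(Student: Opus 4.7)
The plan rests on two structural properties of qc normal coordinates that together reduce the proposition to Cartan's magic formula. First, the infinitesimal generator of the dilations satisfies $P = \sum_\alpha x_\alpha X_\alpha + \sum_i z_i V_i$ in the special frame (as given in the excerpt). Second, since the special frame and co-frame are by construction parallel along each Biquard geodesic $\gamma_{X_q+V_q}$, and since $P|_{\gamma(t)} = t\,\dot\gamma(t)$, the parallelism $\nabla_{\dot\gamma}X_a = 0$ propagates to the global vanishing $\iota_P\omega_{ab} \equiv 0$ throughout the normal neighborhood.

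Using $\theta_\alpha(P) = x_\alpha$, $\eta_i(P) = z_i$, and $\iota_P\omega_{ab} = 0$, Cartan's formula $L_P\varphi = \iota_P d\varphi + d\iota_P\varphi$ combined with the Biquard structure equations
\begin{align*}
d\theta_\alpha &= -\omega_{\gamma\alpha}\wedge\theta_\gamma + T^\alpha,\\
d\eta_i &= -\omega_{\overline{j}\,\overline{i}}\wedge\eta_j + T^{\overline{i}},\\
d\omega_{\alpha\beta} &= \omega_{\alpha\gamma}\wedge\omega_{\gamma\beta} + \tfrac{1}{2}R^\beta_{cd\alpha}\,\theta_c\wedge\theta_d
\end{align*}
yields the recursions
\begin{align*}
L_P\theta_\alpha &= dx_\alpha + x_\gamma\omega_{\gamma\alpha} + \iota_P T^\alpha,\\
L_P\eta_i &= dz_i + z_j\omega_{\overline{j}\,\overline{i}} + \iota_P T^{\overline{i}},\\
L_P\omega_{\alpha\beta} &= R^\beta_{cd\alpha}\,\theta_c(P)\,\theta_d.
\end{align*}
Extracting the order-$l$ part via $L_P\varphi^{(l)} = l\,\varphi^{(l)}$, and doing the weighted-degree bookkeeping ($x_\alpha,\,dx_\alpha,\,\theta_\alpha$ of weight $1$; $z_i,\,dz_i,\,\eta_i$ of weight $2$), the lowest-order analysis immediately gives $\theta_\alpha^{(1)} = dx_\alpha$, $\eta_i^{(1)} = 0$, $\omega_{ab}^{(1)} = 0$, and $\theta_\alpha^{(2)} = 0$. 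Inserting $T^{\overline{i}}_{\alpha\beta}(q) = -2 I^i_{\alpha\beta}$ from Proposition~\ref{identities} into the order-$2$ contribution of the $\eta_i$-recursion yields the claimed expression for $\eta_i^{(2)}$, while the order-$2$ part of the $\omega$-recursion gives $\omega_{ab}^{(2)} = \tfrac{1}{2} R^b_{\alpha\beta a}(q)\,x_\alpha \theta_\beta^{(1)}$. The formulas for $\theta_\alpha^{(3)}$ and $\eta_i^{(4)}$ then follow by plugging the already-computed lower-order pieces back into the same recursion at orders $3$ and $4$, respectively.

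The main delicate step is the vanishing $\eta_i^{(3)} = 0$: the recursion a priori produces an order-$3$ contribution proportional to $T^{\overline{i}}_{\overline{j}\alpha}(q)(z_j\,dx_\alpha - x_\alpha\,\eta_j^{(2)})$ coming from $\iota_P T^{\overline{i}}$. Making it vanish should invoke the full algebraic structure of the Biquard torsion --- in particular the Reeb compatibility~\eqref{reeb} and the constraint that $T(V,\cdot)|_\mathcal{H} \in (sp(n)\oplus sp(1))^\perp$ from axiom~$(3)$ of the Biquard connection, which together impose symmetry relations on $T^{\overline{i}}_{\overline{j}\alpha}$. The same algebraic input also drives the remaining order-$4$ bookkeeping in $\eta_i^{(4)}$. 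I expect this purely tensorial reduction, rather than the Cartan-formula recursion itself, to be the main obstacle.
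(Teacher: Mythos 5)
First, a point of comparison: the paper gives no proof of this proposition at all --- it is imported from Kunkel's dissertation \cite{Kunkel} ("the following expression \dots was obtained in \cite{Kunkel}"). Your strategy is nevertheless exactly the one used in that source (and in Jerison--Lee for the CR case): radial parallelism of the special frame gives $\iota_P\omega_{ab}=0$ together with $\theta_\alpha(P)=x_\alpha$, $\eta_i(P)=z_i$; Cartan's formula applied to the first and second structure equations of the Biquard connection yields the recursions you wrote; and $L_P\varphi^{(l)}=l\varphi^{(l)}$ extracts the homogeneous pieces. Your derivation of the recursions and the weighted-order bookkeeping for $\theta_\alpha^{(1)},\theta_\alpha^{(2)},\theta_\alpha^{(3)},\eta_i^{(2)},\omega_{ab}^{(1)},\omega_{ab}^{(2)}$ are correct.

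There remain, however, two genuine gaps. The first you flag yourself: $\eta_i^{(3)}=0$ forces $T_{\overline{j}\alpha}^{\overline{i}}(q)=0$ (the coefficient of $dz_j$ in $T_{\overline{j}\alpha}^{\overline{i}}(q)\bigl(z_j\,dx_\alpha-x_\alpha\eta_j^{(2)}\bigr)$ is $-\tfrac12 T_{\overline{j}\alpha}^{\overline{i}}(q)x_\alpha$, so no cancellation is possible), and the stated form of $\eta_i^{(4)}$ further requires the first-order Taylor coefficient $(T_{\overline{j}\alpha}^{\overline{i}})^{(1)}$ to vanish. You appeal to the Reeb conditions and to axiom (3) of the Biquard connection, but axiom (3) only constrains the \emph{horizontal} part $T(V,\cdot)_{\mathcal H}$ and says nothing about the vertical component $\eta_k(T(V_j,X))=-\omega_{\overline{j}\,\overline{k}}(X)+d\eta_k(V_j,X)$; what is needed is the identity $\omega_{\overline{j}\,\overline{k}}(X)=d\eta_k(V_j,X)$ for horizontal $X$, which must be extracted from Biquard's construction of the connection on $\mathcal V$ through the parallel identification with $sp(1)$. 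This is not a "purely tensorial reduction" of data you already have; it is an input about the connection that your argument does not supply. The second gap is unflagged: at orders $3$ and $4$ your $\eta_i$-recursion also produces the terms $(T_{\alpha\beta}^{\overline{i}})^{(1)}x_\alpha\,dx_\beta$ and $(T_{\alpha\beta}^{\overline{i}})^{(2)}x_\alpha\,dx_\beta$ coming from the Taylor expansion of the \emph{function} $T_{\alpha\beta}^{\overline{i}}=-2I^i_{\alpha\beta}$. These disappear only because $I^i_{\alpha\beta}$ is constant in the special frame, which in turn follows from the fact that the metric, the frame and the $2$-sphere bundle $\mathbb I$ are all parallel along the radial curves, so $P\bigl(I^i_{\alpha\beta}\bigr)=0$ and hence $I^i_{\alpha\beta}\equiv I^i_{\alpha\beta}(q)$. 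Without this observation the stated formulas for $\eta_i^{(3)}$ and $\eta_i^{(4)}$ do not follow from your recursion.
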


Let us consider the anisotropic expansion of the special frame $\{X_1,\cdots,X_{4n},\\ V_1,V_2,V_3\}$ in the qc normal coordinates $(x_\alpha,z_i)$ near $0$:
\begin{align*}
X_\alpha&=X_\alpha^{(-1)}+X_\alpha^{(0)}+X_\alpha^{(1)}+\cdots\\
V_i&=V_i^{(-2)}+V_i^{(-1)}+V_i^{(0)}+\cdots.
\end{align*}
It was shown in \cite{Kunkel} that the homogeneous terms of the lowest order are given by:
$$X_\alpha^{(-1)}=\frac{\partial}{\partial x_\alpha}+2\sum_{\beta,i}I_{\beta\alpha}^{i}x_\beta\frac{\partial}{\partial z_i}\hspace{2mm}\text{ for }\alpha=1,\cdots,4n$$
and $$V_i^{(-2)}=2\frac{\partial}{\partial z_i}\hspace{2mm}\text{ for }i=1,2,3.$$
Using Proposition \ref{Kunkel}, we compute now the  low order homogeneous terms in the expansion of the special frame.\ \\
 Note that the left-invariant vector fields $$\{X_1^{(-1)},\cdots,X_{4n}^{(-1)},V_1^{(-2)},V_2^{(-2)},V_3^{(-2)}\}$$ on $\mathbb{R}^{4n+3}$ are linearly independent and therefore, every vector field on $\mathbb{R}^{4n+3}$ can be expressed as linear combination of these vector fields.\ \\
In the following we set:
$$\widetilde{X}_\alpha:=X_\alpha^{(-1)}\text{ and }\widetilde{V}_i:=V_i^{(-2)}$$
for $\alpha=1,\cdots,4n$ and $i=1,2,3.$\ \\
The following properties about the order of tensor fields can be proved using (\ref{order}).
\begin{lem}\label{order_pro}
The following hold (whenever it makes sense):
\begin{enumerate}
\item Let $X$ and $Y$ be homogeneous vector fields. Then $[X,Y]$ is homogeneous of order $\text{ord}(X)+\text{ord}(Y)$.
\item let $X$ resp. $\omega$ be a homogeneous vector field resp. $1$-form. Then $\omega(X)$ is homogeneous of order $\text{ord}(\omega)+\text{ord}(X)$.
\item Let $f$ resp. $X$ be a homogeneous function resp.  vector field. Then $fX$ is homogeneous of order $\text{ord}(f)+\text{ord}(X)$.
\end{enumerate}
\end{lem}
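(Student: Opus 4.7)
The plan is to use the infinitesimal characterization of homogeneity given in (\ref{order}): a tensor field $\varphi$ is homogeneous of order $l$ precisely when $L_P\varphi = l\varphi$. With this in hand, each of the three claims reduces to a direct calculation exploiting the fact that $L_P$ is a derivation and is compatible with the natural tensorial operations (bracket, evaluation of a form on a vector, multiplication by a function).

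First, I would verify claim (1) using the Jacobi identity for vector fields. Assume $X$ and $Y$ are homogeneous of orders $a$ and $b$ respectively, so $[P,X]=aX$ and $[P,Y]=bY$. Applying the Jacobi identity to $[P,[X,Y]]$ gives
\begin{equation*}
L_P[X,Y]=[P,[X,Y]]=[[P,X],Y]+[X,[P,Y]]=[aX,Y]+[X,bY]=(a+b)[X,Y],
\end{equation*}
where the last step uses that $a,b$ are real constants. Hence $[X,Y]$ is homogeneous of order $a+b$.

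For claims (2) and (3) I would simply apply the Leibniz rule for $L_P$. Writing $a=\text{ord}(\omega)$ and $b=\text{ord}(X)$ in (2), the smooth function $\omega(X)$ satisfies
\begin{equation*}
L_P(\omega(X))=(L_P\omega)(X)+\omega(L_PX)=(a\omega)(X)+\omega(bX)=(a+b)\omega(X),
\end{equation*}
so it is homogeneous of order $a+b$. Similarly for (3), if $a=\text{ord}(f)$ and $b=\text{ord}(X)$, the derivation property yields
\begin{equation*}
L_P(fX)=(L_Pf)X+f\,L_PX=(af)X+f(bX)=(a+b)(fX).
\end{equation*}

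I do not anticipate any significant obstacle: each part is a one-line verification once the characterization $L_P\varphi=(\text{ord}\,\varphi)\varphi$ is in place. The only minor point deserving attention is that the order $l$ appearing in (\ref{order}) is a numerical constant, which is precisely what legitimizes the manipulations $[aX,Y]=a[X,Y]$, $(a\omega)(X)=a\omega(X)$ and $(af)X=a(fX)$ used above. With that observation, all three assertions follow from the derivation property of $L_P$, supplemented in case (1) by the Jacobi identity.
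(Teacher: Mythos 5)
Your proof is correct and follows exactly the route the paper indicates: the paper gives no written proof, only the remark that the lemma "can be proved using (\ref{order})", i.e.\ the characterization $L_P\varphi=l\varphi$, which is precisely what you use. Your verifications via the Jacobi identity and the derivation property of $L_P$ (noting that the orders are constants) correctly fill in the omitted details.
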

Using Lemma \ref{order_pro} we obtain:
 \begin{lem}\label{expansion_vec}
In qc normal coordinates centered at $q$ we have:
\begin{enumerate}
\item For $\alpha=1,\cdots,4n$, it holds: $$X_\alpha^{(0)}=0\text{ and }  X_\alpha^{(1)}=\sum_{\beta}s_\alpha^{\beta(2)} \widetilde{X}_\beta+\sum_{j}r_\alpha^{\overline{j}(3)} \widetilde{V}_j$$  with 
 $$s_\alpha^{\beta(2)}=-\frac{1}{6}\sum_{\gamma,\delta}R_{\gamma\alpha\delta}^{\beta}(q) x_{\gamma}x_{\delta}-\frac{1}{3}\sum_{i}T_{\overline{i}\alpha}^{\beta}(q)z_i$$
 and $$r_\alpha^{\overline{j}(3)}=-\frac{1}{8}\sum_{\gamma,i}R_{\gamma\alpha \overline{i}}^{\overline{j}}(q)x_\gamma z_i+\frac{1}{2}\sum_{\gamma^\prime,\delta^\prime}I_{\gamma^\prime \delta^\prime}^{j}x_{\gamma^\prime}\left(\frac{1}{6}\sum_{\gamma,\delta} R_{\delta\alpha \gamma}^{\delta^\prime}(q)x_\gamma x_\delta+\frac{1}{3}\sum_{k}T_{\overline{k}\alpha}^{\delta^\prime}(q)z_k\right).$$
\item For $i=1,2,3$, it holds:
 $$ V_i^{(-1)}=0\text{ and }   V_i^{(0)}=\sum_{\beta}s_{\overline{i}}^{\beta(1)} \widetilde{X}_\beta+\sum_{j}r_{\overline{i}}^{\overline{j}(2)} \widetilde{V}_j$$
 with 
 $$s_{\overline{i}}^{\beta(1)}=\frac{1}{3}\sum_{\gamma}T_{\overline{i}\gamma}^{\beta}(q)x_{\gamma}$$
 and $$r_{\overline{i}}^{\overline{j}(2)}=-\frac{1}{4}\sum_{k}T_{\overline{k}\hspace{0,5mm}\overline{i}}^{\overline{j}}(q)z_k-\frac{1}{6}\sum_{\gamma,\delta,\delta^\prime}I_{\gamma \delta}^{j}T_{\overline{i}\delta^\prime}^{\delta}(q)x_{\gamma}x_{\delta^\prime}.$$

\end{enumerate}

 \end{lem}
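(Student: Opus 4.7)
The idea is to exploit duality: write each $X_\alpha$ and $V_i$ in the basis $\{\widetilde X_\beta,\widetilde V_j\}$ of vector fields on $\mathbb{R}^{4n+3}$, read off the coefficients with the dual co-frame, and recover them order by order from the pairing identities
$$\theta_\beta(X_\alpha)=\delta_{\beta\alpha},\quad \eta_i(X_\alpha)=0,\quad \theta_\beta(V_i)=0,\quad \eta_j(V_i)=\delta_{ij},$$
using the homogeneous expansions of $\theta_a,\eta_i$ supplied by Proposition \ref{Kunkel}.

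The first step is the following algebraic observation: the 1-forms $\{dx_\beta,\eta_i^{(2)}\}$ form the co-frame dual to $\{\widetilde X_\beta,\widetilde V_j\}$. Indeed, from $\widetilde X_\beta=\partial/\partial x_\beta+2\sum_{\gamma,i}I^{i}_{\gamma\beta}x_\gamma \partial/\partial z_i$ and $\widetilde V_i=2\partial/\partial z_i$ together with $\eta_i^{(2)}=\frac{1}{2}dz_i-I^{i}_{\alpha\beta}x_\alpha dx_\beta$, one checks biorthogonality directly (the vanishing $\eta_i^{(2)}(\widetilde X_\beta)=0$ follows from the skew-symmetry of $I_i$). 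Hence for \emph{any} vector field $Z$ on $\mathbb{R}^{4n+3}$ we have the decomposition
$$Z=\sum_\beta dx_\beta(Z)\,\widetilde X_\beta+\sum_j \eta_j^{(2)}(Z)\,\widetilde V_j.$$

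Now plug the anisotropic expansions $X_\alpha=\widetilde X_\alpha+X_\alpha^{(0)}+X_\alpha^{(1)}+\cdots$ and $V_i=\widetilde V_i+V_i^{(-1)}+V_i^{(0)}+\cdots$ into the duality identities above, using the expansions $\theta_\beta=dx_\beta+\theta_\beta^{(3)}+\cdots$ and $\eta_i=\eta_i^{(2)}+\eta_i^{(4)}+\cdots$ (recall $\theta_\beta^{(2)}=0=\eta_i^{(3)}$). Matching homogeneous components of order $1$ in $\theta_\beta(X_\alpha)$ and order $2$ in $\eta_i(X_\alpha)$ yields $dx_\beta(X_\alpha^{(0)})=0$ and $\eta_i^{(2)}(X_\alpha^{(0)})=0$ so $X_\alpha^{(0)}=0$ by duality; the same scheme at orders $0$ and $1$ for $V_i$ gives $V_i^{(-1)}=0$. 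At the next order one obtains
$$dx_\beta(X_\alpha^{(1)})=-\theta_\beta^{(3)}(\widetilde X_\alpha),\quad \eta_j^{(2)}(X_\alpha^{(1)})=-\eta_j^{(4)}(\widetilde X_\alpha),$$
and analogous formulas with $\widetilde X_\alpha$ replaced by $\widetilde V_i$ for $V_i^{(0)}$. This immediately identifies $s_\alpha^{\beta(2)},r_\alpha^{\bar j(3)},s_{\bar i}^{\beta(1)},r_{\bar i}^{\bar j(2)}$ as the corresponding values of $-\theta_\beta^{(3)}$ and $-\eta_j^{(4)}$ on $\widetilde X_\alpha$ resp.\ $\widetilde V_i$.

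The remaining work is a direct substitution using Proposition \ref{Kunkel}: most summands in $\theta_\beta^{(3)}$ and $\eta_j^{(4)}$ collapse because $\omega_{ab}^{(2)}$ involves only $dx$'s, $dx_\beta(\widetilde V_i)=0$ and $\eta_k^{(2)}(\widetilde X_\alpha)=0$. The surviving contributions are precisely the ones appearing in the stated formulas; for $\eta_j^{(4)}(\widetilde X_\alpha)$ one has to feed the already-computed value of $\theta_{\delta'}^{(3)}(\widetilde X_\alpha)$ back into the $-2I^{j}_{\gamma'\delta'}x_{\gamma'}\theta_{\delta'}^{(3)}$ term, which is the source of the second summand in $r_\alpha^{\bar j(3)}$. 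There is no conceptual obstacle here; the main risk is purely combinatorial bookkeeping of indices and of the homogeneous order of each factor (which is controlled by Lemma \ref{order_pro}).
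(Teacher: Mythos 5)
Your proposal is correct and follows essentially the same route as the paper: the paper likewise writes $X_\alpha=\sum_\beta s_\alpha^\beta\widetilde X_\beta+\sum_j r_\alpha^{\overline j}\widetilde V_j$, pairs with $\theta_\gamma$ and $\eta_i$, and extracts homogeneous parts to get a recursion whose solvability rests on exactly your duality observation that $\theta_\gamma^{(1)}=dx_\gamma$ and $\eta_i^{(2)}$ are biorthogonal to $\{\widetilde X_\beta,\widetilde V_j\}$, leading to $s_\alpha^{\beta(2)}=-\theta_\beta^{(3)}(\widetilde X_\alpha)$ and $r_\alpha^{\overline j(3)}=-\eta_j^{(4)}(\widetilde X_\alpha)$ and then to the stated formulas by substitution from Proposition \ref{Kunkel} (including feeding $\theta_{\delta'}^{(3)}(\widetilde X_\alpha)$ back into $\eta_j^{(4)}$). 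The only cosmetic remark is that $\eta_i^{(2)}(\widetilde X_\beta)=0$ is a direct cancellation of the two summands rather than a consequence of skew-symmetry of $I_i$.
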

 \begin{proof} 
 \begin{enumerate}
 
\item
 We can write locally near $0$:
 \begin{equation}\label{frame}
 X_\alpha=\sum_{\beta}s_\alpha^\beta \widetilde{X}_\beta+\sum_{j}r_\alpha^{\overline{j}} \widetilde{V}_j
 \end{equation}
 for some smooth functions $s_\alpha^\beta$ and $r_\alpha^{\overline{j}}$ locally defined near $0$. Let us consider the anisotropic expansion of these functions at $0$:
 \begin{align*}
 s_{\alpha}^{\beta}&=s_{\alpha}^{\beta(0)}+s_{\alpha}^{\beta(1)}+s_{\alpha}^{\beta(2)}+\cdots\\
 r_{\alpha}^{\overline{j}}&=r_{\alpha}^{\overline{j}(0)}+r_{\alpha}^{\overline{j}(1)}+r_{\alpha}^{\overline{j}(2)}+\cdots.
 \end{align*}
 Here $s_{\alpha}^{\beta(l)}$ (resp. $r_{\alpha}^{\overline{j}(l)}$) (for $l\geq 0$) denotes the homogeneous term of order $l$ in the expansion of $s_{\alpha}^{\beta}$ (resp. $r_{\alpha}^{\overline{j}}$).
 
  Now, applying $\theta_\gamma$ (resp. $\eta_i$) on both sides of (\ref{frame}) and taking the homogeneous parts of order $l$ (see Lemma \ref{order_pro}), we obtain the following recursive formulas for $s_{\alpha}^{\gamma(l)}$ and $r_{\alpha}^{\overline{i}(l)}$ (for $l\geq 1$): 
  \begin{align*}
  s_{\alpha}^{\gamma(l)}&=-\sum_{m=0}^{l-1}\sum_{\beta}s_\alpha^{\beta(m)}\theta_{\gamma}^{(l-m+1)}(\widetilde{X}_\beta)-\sum_{m=0}^{l}\sum_jr_\alpha^{\overline{j}(m)}\theta_{\gamma}^{(l-m+2)}(\widetilde{V}_j)\\
  r_\alpha^{\overline{i}(l)}&=-\sum_{m=0}^{l-1}\sum_{\beta}s_\alpha^{\beta(m)}\eta_{i}^{(l-m+1)}(\widetilde{X}_\beta)-\sum_{m=0}^{l-1}\sum_jr_\alpha^{\overline{j}(m)}\eta_{i}^{(l-m+2)}(\widetilde{V}_j)
  \end{align*}
   with initial terms $s_\alpha^{\gamma(0)}=\delta_{\alpha\gamma}$ and $r_\alpha^{\overline{i}(0)}=0$.

Combining these recursive formulas with Proposition \ref{Kunkel} we find
$$s_\alpha^{\beta(1)}=0\text{ and }r_\alpha^{\overline{j}(0)}=r_\alpha^{\overline{j}(1)}=r_\alpha^{\overline{j}(2)}=0.$$
Hence, it follows that
  $$s_\alpha^{\beta(2)}=-\theta_{\beta}^{(3)}(\widetilde{X}_\alpha)=-\frac{1}{6}\sum_{\gamma,\delta}R_{\gamma\alpha \delta}^{\beta}(q)x_{\gamma}x_{\delta}-\frac{1}{3}\sum_{i}T_{\overline{i}\alpha}^{\beta}(q)z_i,$$
and
 \begin{align*}
  &r_\alpha^{\overline{j}(3)}=-\eta_j^{(4)}(X_\alpha)\\
  &=-\frac{1}{8}\sum_{\gamma,i}R_{\gamma\alpha \overline{i}}^{\overline{j}}(q)x_\gamma z_i+\frac{1}{2}\sum_{\gamma^\prime,\delta^\prime}I_{\gamma^\prime \delta^\prime}^{j}x_{\gamma^\prime}\left(\frac{1}{6}\sum_{\gamma,\delta} R_{\delta\alpha \gamma}^{\delta^\prime}(q)x_\gamma x_\delta+\frac{1}{3}\sum_{k}T_{\overline{k}\alpha}^{\delta^\prime}(q)z_k\right).
 \end{align*}
 \item The proof is similar to $(1)$ and is left to the reader.
 \end{enumerate}
 \end{proof}
 \begin{rem}
 In qc normal coordinates at $q$, the homogeneous terms of order $0$ of the special frame vanishes, i.e.
 \begin{equation}\label{vanishing}
  X_\alpha^{(0)}=0\text{ for all }\alpha=1,\cdots,4n.
 \end{equation}
 The same holds for the privileged coordinates considered in \cite{Bari} for $3D$ contact manifolds and in \cite{Lee} for CR manifolds. In comparison, in \cite{Wangwu} a system of privileged coordinates for Riemannian contact manifolds was constructed using similar techniques as in \cite{Lee}. But therein, the obstruction for the considered special frame to have the property (\ref{vanishing}) is the obstruction for the Riemannian manifold to be a CR manifold. It is natural to ask in general, if there is some relation between the construction of privileged coordinates for which the property (\ref{vanishing}) holds (if it is possible) and the geometry of the manifold.
 
 \end{rem}
\subsection{First heat invariant}In the following, we assume that the qc manifold $M$ is complete.
For every $q\in M$, the tangent group $\mathbb{G}(q)$ of the subriemannian manifold $M$ at $q$ is isomorphic to the unique connected, simply connected, step two nilpotent Lie group associated to the Lie algebra generated by the vector fields
$$\{\widetilde{X}_1,\cdots,\widetilde{X}_{4n}\}.$$
Using global exponential coordinates, the group law on $\mathbb{G}(q)\simeq \mathbb{R}^{4n+3}$ is given by
$$(x,z)\ast(x^\prime,z^\prime)=(x^{\prime\prime},z^{\prime\prime}),$$
for $(x,z),(x^\prime,z^\prime)\in\mathbb{R}^{4n+3}$ with
$$x^{\prime\prime}_\alpha=x_\alpha+x^\prime_\alpha\text{ and }z_i^{\prime\prime}=z_i+z_i^\prime+2\sum_{\alpha,\beta=1}^{4n}I^{i}_{\alpha\beta}x_\alpha x_{\beta}^{\prime}.$$
Furthermore, by definition and according to Proposition \ref{Kunkel}, the nilpotentization of the Popp measure $\mathcal{P}$ at $q$ is the Haar measure $\widetilde{\mathcal{P}}_q$ on $\mathbb{G}(q)$ given by
\begin{align*}
\widetilde{\mathcal{P}}_q&=\frac{1}{(16n)^{3/2}}\theta_1^{(1)}\wedge\cdots\wedge\theta_{4n}^{(1)}\wedge\eta_1^{(2)}\wedge\eta_2^{(2)}\wedge\eta_3^{(2)}\\
&=\frac{1}{8(16n)^{3/2}}dx_1\wedge\cdots\wedge dx_{4n}\wedge dz_1\wedge dz_2\wedge dz_3.
\end{align*}
In order to compute the first heat invariant $c_0(q)$ we need to derive the heat kernel $p_{\mathbb{G}(q)}$ of the sublaplacian
$$\widetilde{\Delta}_{sub}:=-\sum_{\alpha=1}^{4n}\widetilde{X}_\alpha^2$$
on $\mathbb{G}(q)$ with respect to the Haar measure $\widetilde{\mathcal{P}}_q$. Explicitly, this is obtained by the {\it Beals-Gaveau-Greiner formula} for the sublaplacian  on  general  step two nilpotent Lie groups in \cite{BGG,CCFI}, which we recall next.
For  $h,h^\prime\in\mathbb{G}(q)$ it holds:
\begin{equation}\label{beals}
p_{\mathbb{G}(q)}(t,h,h^\prime)=\frac{8(16n)^{3/2}}{(4\pi
t)^{2n+3}}\int_{\mathbb{R}^3}e^{-\frac{\varphi(\tau,\,h^{-1}\: * \: h^\prime)}{t}}\,W(\tau)\,d\tau,
\end{equation}
where the  {\it action function} $\varphi=\varphi(\tau,h)\in C^{\infty}(\mathbb{R}^{3}\times
\mathbb{G}(q))$ and the  {\it volume element} $W(\tau)\in C^{\infty}(\mathbb{R}^3)$ are given as follows:
Put $h=(x,z)\in \mathbb{R}^{4n}\times\mathbb{R}^3$, then
\begin{align*}
&\varphi(\tau,h)=\varphi(\tau,x,z)=\sqrt{-1}\langle \tau,\,z \rangle 
+\frac{1}{2}\Big{\langle}\sqrt{-1}\Omega_{\tau}\coth\bigr(\sqrt{-1}\Omega_{\tau}\bigr)
\cdot x,x\Big{\rangle},\\
&W(\tau)=
\left\{\det\frac{\sqrt{-1}\Omega_{\tau}}{\sinh \sqrt{-1}\Omega_{\tau}}\right\}^{1/2},
\end{align*}
where $\langle \tau,\tau^{\prime}\rangle=\sum\limits_{i=1}^3 \tau_i \tau_i^{\prime}$ denotes the Euclidean inner product on $\mathbb{R}^3$ and the matrix $\Omega_\tau$ encodes the structure constants of the Lie algebra and is given by 
$$\Omega_\tau=2\sum_{i=1}^{3}\tau_i(I^{i}_{\alpha\beta})_{\alpha,\beta}\in\mathbb{R}^{4n\times 4n}.$$
Using the relations (\ref{almost}), we see that the eigenvalues of $\sqrt{-1}\Omega_\tau$ are $\pm 2\|\tau\|$. Hence the functions $\varphi(\tau,h)$ and $W(\tau)$ take the forms
\begin{align*}
&\varphi(\tau,h)=\varphi(\tau,x,z)=\sqrt{-1}\langle \tau,\,z \rangle 
+\frac{1}{2}\left(\|2\tau\|\coth{\|2\tau\|}\right)\|x\|^2,\\
&W(\tau)=\left(\frac{\|2\tau\|}{\sinh{\|2\tau\|}}\right)^{2n}.
\end{align*}
Recalling that the first heat invariant is given by
$$c_0(q)=p_{\mathbb{G}(q)}(1,0,0),$$
we have the following formula:
\begin{thm}\label{fund1}
 The first heat invariant $c_0(q)$ of the complete qc manifold $M$ is independent of the point $q\in M$ and is given by
$$c_0(q)=\frac{(16n)^{3/2}}{(4\pi)^{2n+3}}\int_{\mathbb{R}^3}\left(\frac{\|\tau\|}{\sinh{\|\tau\|}}\right)^{2n}d\tau.$$
\end{thm}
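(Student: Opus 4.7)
The plan is to read off $c_0(q)=p_{\mathbb{G}(q)}(1,0,0)$ directly from the Beals--Gaveau--Greiner formula \eqref{beals}, using the simplifications of $\varphi$ and $W$ which the author has already derived from the quaternionic commutation relations. No deeper machinery is needed; the only real content is (i) a change of variables to absorb the factor of $2$ coming from the eigenvalues $\pm 2\|\tau\|$ of $\sqrt{-1}\Omega_\tau$, and (ii) an observation that the resulting quantity does not depend on $q$.

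Concretely, I would set $t=1$ and $h=h'=0$ in \eqref{beals}, so that $h^{-1}\ast h'=(0,0)$. In the expression
$$\varphi(\tau,x,z)=\sqrt{-1}\langle \tau,z\rangle+\tfrac{1}{2}\bigl(\|2\tau\|\coth\|2\tau\|\bigr)\|x\|^2,$$
both summands vanish at $(x,z)=(0,0)$, so the exponential factor in the integrand collapses to $1$. Thus
$$c_0(q)=\frac{8(16n)^{3/2}}{(4\pi)^{2n+3}}\int_{\mathbb{R}^3}W(\tau)\,d\tau =\frac{8(16n)^{3/2}}{(4\pi)^{2n+3}}\int_{\mathbb{R}^3}\left(\frac{\|2\tau\|}{\sinh\|2\tau\|}\right)^{2n}d\tau.$$
Performing the change of variables $\tau\mapsto \tau/2$ on $\mathbb{R}^3$ produces a Jacobian factor $1/8$ that cancels the leading $8$, yielding exactly the formula in the statement.

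Finally, for the independence from $q$: the nilpotent tangent group $\mathbb{G}(q)$ is modelled on the quaternionic Heisenberg-type algebra built from the $I^i_{\alpha\beta}(q)$, and while these structure constants depend on $q$, the matrix $\Omega_\tau$ enters the integrand only through its spectrum, which is $\{\pm 2\|\tau\|\}$ (each of multiplicity $2n$) by virtue of the quaternionic relations \eqref{almost}. Hence both $W(\tau)$ and, after the reduction above, the entire integral are manifestly $q$-independent; the prefactor $(16n)^{3/2}/(4\pi)^{2n+3}$ is a universal constant. The only step deserving care is bookkeeping the normalization constant $8(16n)^{3/2}$ coming from Lemma \ref{measure} and the identification of the Haar measure $\widetilde{\mathcal{P}}_q$ with $\tfrac{1}{8(16n)^{3/2}}\,dx\wedge dz$ used in \eqref{beals}; no genuine obstacle arises.
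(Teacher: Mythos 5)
Your proposal is correct and follows essentially the same route as the paper: the paper also obtains $c_0(q)=p_{\mathbb{G}(q)}(1,0,0)$ by evaluating the Beals--Gaveau--Greiner formula \eqref{beals} at $t=1$, $h=h'=0$ (where $\varphi$ vanishes), using that the spectrum $\{\pm2\|\tau\|\}$ of $\sqrt{-1}\Omega_\tau$ makes $W(\tau)=(\|2\tau\|/\sinh\|2\tau\|)^{2n}$ independent of $q$, and then rescaling $\tau$ so the Jacobian $1/8$ cancels the prefactor $8$. Your bookkeeping of the normalization $8(16n)^{3/2}$ coming from the Haar measure $\widetilde{\mathcal{P}}_q$ matches the paper's.
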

\begin{rem}
The expression for $c_0$ obtained in \cite{Baud_Wang} for the case where $M=\mathbb{S}^{4n+3}$ with the standard qc structure differs from our formula by the factor $(16n)^{3/2}$. This is due to the fact that our sublaplacian is defined with respect to the Popp measure, which differs from the Riemannian measure on $\mathbb{S}^{4n+3}$ (with respect to the standard metric) by the factor $\frac{1}{(16n)^{3/2}}$ (see Lemma \ref{measure}).

\end{rem}

\subsection{Second heat invariant}

In general, computing the remaining heat invariants $c_1,c_2,\cdots$ with the help of  the nilpotentization is rather complicated. However, using the qc normal coordinates on a qc manifold, it is possible to give a geometric meaning to the second heat invariant $c_1(q)$ and this is the main goal of the present paper.\ \\
For this, by (\ref{second_heat_invariant}), Lemma \ref{sublap} and Lemma \ref{expansion_vec}, additionally we need the low order homogeneous terms in the expansion of the horizontal vector field $\epsilon Y^\epsilon=\epsilon^2 \delta_\epsilon^\ast(Y)$, where
$$Y:=\sum_{\alpha=1}^{4n}\left(\sum_{a=1}^{4n+3}c_{a\alpha}^{a}\right)X_\alpha.$$
A straightforward calculation shows that 
$$\epsilon Y^\epsilon=\sum_{\alpha=1}^{4n}\left(\sum_{a=1}^{4n+3}c_{a\alpha}^{a}(\epsilon)\right)X_\alpha^\epsilon,$$
where $c_{a\alpha}^{a}(\epsilon)$ is defined by
$$c_{a\alpha}^{a}(\epsilon):=\theta_a^{\epsilon}([X_a^\epsilon,X_\alpha^\epsilon])$$
with $\theta_a^\epsilon:=\frac{1}{\epsilon}\delta_\epsilon^\ast(\theta_a)$.

\begin{lem}\label{first_order_term}
Let $\alpha=1,\cdots,4n$. As $\epsilon\to 0$, it holds:
$$\sum_{a=1}^{4n+3}c_{a\alpha}^{a}(\epsilon)=\epsilon^2\left(\sum_{\beta=1}^{4n}\widetilde{X}_\beta(s_\alpha^{\beta(2)})-\widetilde{X}_\alpha(s_\beta^{\beta(2)})+\sum_{i=1}^{3}\widetilde{V}_i(r_\alpha^{\overline{i}(3)})-\widetilde{X}_\alpha(r_{\overline{i}}^{\overline{i}(2)})\right)+O(\epsilon^3).$$
\end{lem}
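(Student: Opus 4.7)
The plan is to substitute the anisotropic expansions from Proposition \ref{Kunkel} and Lemma \ref{expansion_vec} into the definition $c_{a\alpha}^a(\epsilon) = \theta_a^\epsilon\bigl([X_a^\epsilon, X_\alpha^\epsilon]\bigr)$, split the sum over $a$ into the horizontal range $a = \beta \in \{1,\ldots,4n\}$ and the vertical range $a = \bar i \in \{\bar 1, \bar 2, \bar 3\}$, and collect terms up to order $\epsilon^3$.

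The required inputs are the following. From Lemma \ref{expansion_vec}, the vanishings $X_\alpha^{(0)} = 0$ and $V_i^{(-1)} = 0$ give $X_\alpha^\epsilon = \widetilde X_\alpha + \epsilon^2 X_\alpha^{(1)} + O(\epsilon^3)$ and $V_i^\epsilon = \widetilde V_i + \epsilon^2 V_i^{(0)} + O(\epsilon^3)$, with explicit expressions $X_\alpha^{(1)} = s_\alpha^{\beta(2)}\widetilde X_\beta + r_\alpha^{\bar j(3)}\widetilde V_j$ and $V_i^{(0)} = s_{\bar i}^{\gamma(1)}\widetilde X_\gamma + r_{\bar i}^{\bar j(2)}\widetilde V_j$. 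From Proposition \ref{Kunkel}, $\theta_\beta^{(2)} = 0$ and $\eta_i^{(3)} = 0$, so the leading corrections of $\theta_\beta^\epsilon$ and $\eta_i^\epsilon$ are also of order $\epsilon^2$ relative to their leading terms $\theta_\beta^{(1)}$ and $\eta_i^{(2)}$. Finally, on the tangent group $\mathbb{G}(q)$ one has the duality pairings $\theta_\beta^{(1)}(\widetilde X_\gamma) = \delta_{\beta\gamma}$, $\theta_\beta^{(1)}(\widetilde V_j) = 0$, $\eta_i^{(2)}(\widetilde X_\gamma) = 0$, $\eta_i^{(2)}(\widetilde V_j) = \delta_{ij}$, together with the bracket relations $[\widetilde X_\beta, \widetilde X_\alpha] = 2 I_{\beta\alpha}^i\widetilde V_i$ and $[\widetilde V_i, \widetilde X_\alpha] = [\widetilde V_i, \widetilde V_j] = 0$ that reflect the step-two Heisenberg-type structure.

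Expanding the bracket for horizontal $a = \beta$, the order-zero contribution $\theta_\beta^{(1)}([\widetilde X_\beta, \widetilde X_\alpha])$ vanishes because the bracket lies in the vertical distribution, which $\theta_\beta^{(1)}$ annihilates. The $\epsilon^2$ contribution consists of $\theta_\beta^{(1)}\bigl([\widetilde X_\beta, X_\alpha^{(1)}] + [X_\beta^{(1)}, \widetilde X_\alpha]\bigr)$ together with $\theta_\beta^{(3)}([\widetilde X_\beta, \widetilde X_\alpha])$; using the dualities above and $[\widetilde X_\beta, \widetilde V_j] = 0$, the first piece reduces to exactly $\widetilde X_\beta(s_\alpha^{\beta(2)}) - \widetilde X_\alpha(s_\beta^{\beta(2)})$, while the second produces a residual $2 I_{\beta\alpha}^i\theta_\beta^{(3)}(\widetilde V_i)$, which I will evaluate from Proposition \ref{Kunkel} as $-\tfrac{2}{3} I_{\beta\alpha}^i T_{\bar i\gamma}^\beta(q)\, x_\gamma$. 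For vertical $a = \bar i$, the same procedure yields $\widetilde V_i(r_\alpha^{\bar i(3)}) - \widetilde X_\alpha(r_{\bar i}^{\bar i(2)})$ plus a residual $2 s_{\bar i}^{\gamma(1)} I_{\gamma\alpha}^i$ originating from the horizontal part $s_{\bar i}^{\gamma(1)}\widetilde X_\gamma$ of $V_i^{(0)}$ bracketed with $\widetilde X_\alpha$.

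The only nontrivial point, and the main obstacle, is the cancellation of the two residuals. Substituting $s_{\bar i}^{\gamma(1)} = \tfrac{1}{3} T_{\bar i\delta}^\gamma(q)\, x_\delta$ from Lemma \ref{expansion_vec}, both residuals become expressions of the form $\pm\tfrac{2}{3} I_{\beta\alpha}^i T_{\bar i\gamma}^\beta(q)\, x_\gamma$, with opposite signs after a simple relabeling of dummy indices, and hence cancel upon summing over $\beta$ and $i$. This cancellation reflects the compatibility between the horizontal torsion of the Biquard connection and the $Sp(n)Sp(1)$-structure on $\mathcal H$. Once it is in place, the surviving contributions assemble precisely into the expression claimed in the lemma.
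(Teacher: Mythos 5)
Your proposal is correct and follows essentially the same route as the paper: expand $X_a^\epsilon$ and $\theta_a^\epsilon$ using the vanishing of $X_\alpha^{(0)}$, $V_i^{(-1)}$, $\theta_\beta^{(2)}$, $\eta_i^{(3)}$, split the sum over $a$ into horizontal and vertical ranges, and observe that the two leftover terms $\sum_{\beta}\theta_\beta^{(3)}([\widetilde{X}_\beta,\widetilde{X}_\alpha])$ and $\sum_{\beta,i}s_{\overline{i}}^{\beta(1)}\eta_i^{(2)}([\widetilde{X}_\beta,\widetilde{X}_\alpha])$ cancel. You actually carry out explicitly the cancellation that the paper only asserts as ``a straightforward calculation,'' and your evaluation of both residuals as $\mp\tfrac{2}{3}I^i_{\beta\alpha}T^{\beta}_{\overline{i}\gamma}(q)x_\gamma$ checks out.
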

\begin{proof}
By Proposition \ref{Kunkel} and Lemma \ref{expansion_vec}, as $\epsilon\to 0$ it holds:
$$X_\gamma^\epsilon=\widetilde{X}_\gamma+\epsilon^2 X_\gamma^{(1)}+O(\epsilon^3)$$
and $$\theta_\gamma^\epsilon=\theta_\gamma^{(1)}+\epsilon^2\theta_\gamma^{(3)}+O(\epsilon^3)$$ for $\gamma=1,\cdots,4n$.
Hence, we can write
\begin{align*}
c_{\beta\alpha}^{\beta}(\epsilon)&=\theta_\beta^{\epsilon}([X_\beta^\epsilon,X_\alpha^\epsilon])\\
&=\left(\theta_\beta^{(1)}+\epsilon^2\theta_\beta^{(3)}+O(\epsilon^3)\right)\left([\widetilde{X}_\beta,\widetilde{X}_\alpha]+\epsilon^2[\widetilde{X}_\beta,X_\alpha^{(1)}]+\epsilon^2[X_\beta^{(1)},\widetilde{X}_\alpha]+O(\epsilon^4)\right)\\
&=\theta_\beta^{(1)}([\widetilde{X}_\beta,\widetilde{X}_\alpha])+\epsilon^2 \left(\theta_\beta^{(1)}\left([\widetilde{X}_\beta,X_\alpha^{(1)}]+[X_\beta^{(1)},\widetilde{X}_\alpha]\right)+\theta_\beta^{(3)}\left([\widetilde{X}_\beta,\widetilde{X}_\alpha]\right)\right)+O(\epsilon^3).
\end{align*}
Using the fact that $\theta_\beta^{(1)}=dx_\beta$ and that the Lie brackets of two horizontal left-invariant vector fields is a vertical vector field, i.e. 
$$\theta_\beta^{(1)}([\widetilde{X}_\beta,\widetilde{X}_\alpha])=dx_\beta\left(\sum_{i=1}^{3}2I_{\beta\alpha}^{i}\frac{\partial}{\partial z_i}\right)=0,$$
it follows that
\begin{equation}\label{strct}
c_{\beta\alpha}^{\beta}(\epsilon)=\epsilon^2 \left(\theta_\beta^{(1)}\left([\widetilde{X}_\beta,X_\alpha^{(1)}]+[X_\beta^{(1)},\widetilde{X}_\alpha]\right)+\theta_\beta^{(3)}\left([\widetilde{X}_\beta,\widetilde{X}_\alpha]\right)\right)+O(\epsilon^3).
\end{equation}

We recall that by Lemma \ref{expansion_vec}, it holds for $\delta=1,\cdots,4n$:
$$X_\delta^{(1)}=\sum_{\gamma}s_\delta^{\gamma(2)} \widetilde{X}_\gamma+\sum_{j}r_\delta^{\overline{j}(3)} \widetilde{V}_j.$$
Inserting the last identity into Equation (\ref{strct}) and using the fact that $\theta_\beta^{(1)}(\widetilde{V}_i)=0$ for all $i$ and $\beta$, we obtain
\begin{equation}\label{strc1}
c_{\beta\alpha}^{\beta}(\epsilon)=\epsilon^2\left(\widetilde{X}_\beta(s_\alpha^{\beta(2)})-\widetilde{X}_\alpha(s_\beta^{\beta(2)})+\theta_\beta^{(3)}\left([\widetilde{X}_\beta,\widetilde{X}_\alpha]\right)\right)+O(\epsilon^3).
\end{equation}
In the same way, we can prove that for $i=1,2,3$:
\begin{equation}\label{strc2}
c_{\overline{i}\alpha}^{\overline{i}}(\epsilon)=\epsilon^2\left(\widetilde{V}_i (r_\alpha^{\overline{i}(3)})-\widetilde{X}_\alpha (r_{\overline{i}}^{\overline{i}(2)})+\sum_{\beta=1}^{4n}s_{\overline{i}}^{\beta(1)}\eta_i^{(2)}\left([\widetilde{X}_\beta,\widetilde{X}_\alpha]\right)\right)+O(\epsilon^3).
\end{equation}
Again by using Proposition \ref{Kunkel} and Lemma \ref{expansion_vec}, a straightforward calculation shows that
$$\sum_{\beta=1}^{4n}\theta_\beta^{(3)}\left([\widetilde{X}_\beta,\widetilde{X}_\alpha]\right)=-\sum_{\beta=1}^{4n}\sum_{i=1}^{3}s_{\overline{i}}^{\beta(1)}\eta_i^{(2)}\left([\widetilde{X}_\beta,\widetilde{X}_\alpha]\right).$$
Now the assertion follows by adding Equations (\ref{strc1}) and (\ref{strc2}).
\end{proof}
Using Lemma \ref{first_order_term} and Lemma \ref{sublap} we can write
$$\Delta_{sub}^{\epsilon}=\widetilde{\Delta}_{sub}+\epsilon^2\mathcal{P}_2+\epsilon^3\mathcal{R}^\epsilon_q,$$
where $\mathcal{P}_2$ and $\mathcal{R}^\epsilon_q$ are second-order differential operators with (we omit the summation signs for repeated indices)
\begin{equation}\label{second order}
\mathcal{P}_2=\widetilde{X}_\alpha X_\alpha^{(1)}+X_\alpha^{(1)}\widetilde{X}_\alpha+\left(\widetilde{X}_\beta(s_\alpha^{\beta(2)})-\widetilde{X}_\alpha(s_\beta^{\beta(2)})+\widetilde{V}_i(s_\alpha^{\overline{i}(3)})-\widetilde{X}_\alpha(s_{\overline{i}}^{\overline{i}(2)})\right)\widetilde{X}_\alpha.
\end{equation}
We recall that the second heat invariant $c_1(q)$ can be expressed in the from
\begin{equation}\label{second_invariant}
c_1(q)=\int_{0}^{1}\int_{\mathbb{R}^{4n+3}}p_{\mathbb{G}(q)}(1-s,0,(x,z))\mathcal{P}_2\left( p_{\mathbb{G}(q)}(s,(x,z),0)\right) dxdzds.
\end{equation}
Note also that the heat kernel
$$p_{\mathbb{G}(q)}(t,0,(x,z))=\frac{(16n)^{3/2}}{(4\pi t)^{2n+3}}\int_{\mathbb{R}^3}e^{-(\sqrt{-1}\langle \tau,z\rangle+\frac{1}{2}\|\tau\|\coth{(\|\tau\|)}\|x\|^2)/t}\left(\frac{\|\tau\|}{\sinh{\|\tau\|}}\right)^{2n}d\tau$$
is invariant under the action of the orthogonal group ${\bf O}(4n)$ (resp. ${\bf O}(3)$) on the variable $x$ (resp. $z$).\ \\
According to (\ref{second order}), the second-order differential operator $\mathcal{P}_2$ can be completely expressed through the vector fields
$$\widetilde{X}_\alpha,\hspace{1mm} X_\alpha^{(1)},\hspace{1mm} \widetilde{V}_i,\hspace{1mm} V_{i}^{(0)}.$$
Furthermore, by Lemma \ref{expansion_vec} these vector fields can be written in terms of the torsion and curvature tensors of the Biquard connection at $q$ and the left-invriant vector fields $\widetilde{X}_\alpha,\widetilde{V}_i$. Hence, it follows from (\ref{second_invariant}) that $c_1(q)$ can be expressed as a polynomial in the torsion and curvature tensors at $q$. In fact, using the ${\bf O}(4n)\times {\bf O}(3)$-invariance of $p_{\mathbb{G}(q)}$, more can be proved:
\begin{thm}\label{fund}
Let $(M,\mathcal{H},\langle\cdot,\cdot\rangle)$ be a complete quaternionic contact manifold and let $q\in M$. Then it holds:
$$p(t,q,q)=\frac{1}{t^{2n+3}}\left(c_0+C_n\kappa(q)t+o(t)\right)\text{ as }t\to 0.$$
Here $\kappa(q)$ denotes the qc scalar curvature of the Biquard connection (see Definition \ref{defi}) and $C_n$ is a universal constant depending only on $n$ and independent of the qc manifold $M$.
\end{thm}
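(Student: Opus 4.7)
The plan is to expand the integral formula (\ref{second_invariant}) for $c_1(q)$ using the explicit form of $\mathcal{P}_2$ and then exploit the invariances of the nilpotent heat kernel $p_{\mathbb{G}(q)}$ to reduce the answer to a scalar multiple of the qc scalar curvature. First I would substitute the expressions of Lemma \ref{expansion_vec} for $X_\alpha^{(1)}$ and $V_i^{(0)}$, together with Proposition \ref{Kunkel}, into (\ref{second order}). Each term of $\mathcal{P}_2$ then takes the form (polynomial in the qc normal coordinates $(x,z)$) $\times$ (one component of the torsion or curvature tensor of the Biquard connection at $q$) $\times$ (first- or second-order monomial in the left-invariant fields $\widetilde X_\alpha,\widetilde V_i$). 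Plugging this into (\ref{second_invariant}) presents $c_1(q)$ as a finite sum $\sum\mathbf T(q)\cdot\mathcal I_{\mathbf T}$, in which each $\mathbf T(q)$ is one component of $R(q)$ or $T(q)$ and each $\mathcal I_{\mathbf T}$ is a universal multiple integral on $[0,1]\times\mathbb R^{4n+3}$ depending only on $n$. In particular, since $\mathcal{P}_2$ is linear in the tensors $R(q),T(q)$, each summand of $c_1(q)$ contains at most one curvature or torsion factor.

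Next I would invoke the $\mathbf O(4n)\times\mathbf O(3)$-invariance of $p_{\mathbb G(q)}(t,0,(x,z))$, which is transparent from the Beals-Gaveau-Greiner formula since the kernel depends on $(x,z)$ only through $\|x\|^2$ and $\|z\|$. Viewed as a tensor in its free horizontal and vertical indices, each $\mathcal I_{\mathbf T}$ therefore inherits $\mathbf O(4n)\times\mathbf O(3)$-equivariance and, by the first main theorem of invariant theory for the orthogonal group, must be a linear combination of products of Kronecker deltas on the horizontal indices and on the vertical indices. In particular, parity annihilates every contribution of odd total horizontal or vertical degree, so the coefficients of $\mathcal{P}_2$ that are linear in $(x_\alpha)$ or in $(z_i)$ contribute nothing. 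The surviving pieces thus compute the various $\mathbf O(4n)\times\mathbf O(3)$-traces of the torsion and curvature components appearing in Lemma \ref{expansion_vec}.

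The final step is to collapse these invariant traces to a rational multiple of $\kappa(q)$ using Proposition \ref{identities} together with the first Bianchi identity of the Biquard connection. Item (2) of Proposition \ref{identities}, combined with the antisymmetry of the torsion, eliminates the purely vertical torsion traces; item (3) eliminates the mixed horizontal $T$-$I$ traces; item (1) replaces every occurrence of $T^{\overline i}_{\alpha\beta}$ by $-2I^i_{\alpha\beta}$; and, invoking the first Bianchi identity, every horizontal Ricci-type contraction of the Biquard curvature collapses via items (4) and (5) to a rational multiple of $\kappa(q)$. Summing the resulting coefficients produces the universal constant $C_n$, and the theorem follows.

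\emph{Main obstacle.} The technical heart of the proof is this final combinatorial step: a priori the procedure produces several candidate scalar invariants built from $R(q)$, $T(q)$, and the $I^i$ besides $\kappa(q)$ --- for instance, traces of the form $\sum_{\alpha,\beta}R^{\overline i}_{\alpha\beta\overline i}(q)$, horizontal contractions of $T(V_i,\cdot)$, or cross-contractions of $R(q)$ with the $I^i$. One must verify that each such candidate either cancels between the several terms produced by Lemma \ref{expansion_vec} or reduces to a multiple of $\kappa(q)$ through Proposition \ref{identities}. The $Sp(n)Sp(1)$-rigidity of the horizontal structure, together with the strong algebraic Biquard identities of Proposition \ref{identities}, is exactly what ensures that no non-$\kappa$ scalar survives in $c_1(q)$.
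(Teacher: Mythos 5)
Your proposal is correct and follows essentially the same route as the paper: expand $\mathcal{P}_2$ via Lemma \ref{expansion_vec} and Proposition \ref{Kunkel}, use the $\mathbf{O}(4n)\times\mathbf{O}(3)$-invariance and parity of $p_{\mathbb{G}(q)}$ to kill all but the trace-type contractions, and then reduce the surviving invariants (including the $I^i$--$R$ and $I^i$--$T$ cross-contractions you flag as the main obstacle) to multiples of $\kappa(q)$ via Proposition \ref{identities}. The only cosmetic difference is that you package the reduction through the first main theorem of invariant theory, whereas the paper verifies the vanishing of the relevant convolution integrals directly.
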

\begin{proof}
Note that our structure has two symmetries, the first one being the $Sp(n)Sp(1)$-action on the $Sp(n)Sp(1)$-frame $\{X_\alpha\}$ and the second one the $SO(3)$-action on the vertical frame $\{V_i\}$. That is, since $c_1(q)$ depends only on the point $q$ and not on the choice of the special frame $\{X_\alpha,V_i\}$, the expression of $c_1(q)$ which involves the torsion and curvature tensors at $q$ must be invariant under the action of these groups. Therefore, we could use these symmetries to find the explicit dependence of $c_1(q)$ on the torsion and curvature tensors at $q$ (see \cite{BGS} for the case of CR manifolds). However, we rather use the symmetries of the heat kernel $p_{\mathbb{G}(q)}$ on the quaternionic Heisenberg group and standard identities of torsion and curvature tensors like the ones in Proposition \ref{identities} to obtain our result.  In the following we omit the summation signs for repeated indices to simplify the notations. We set
 $$\mathcal{P}_2=\mathcal{P}_{21}+\mathcal{P}_{22},$$
 where
 \begin{align*}
 \mathcal{P}_{21}:&=\widetilde{X}_\alpha X_\alpha^{(1)}+X_\alpha^{(1)}\widetilde{X}_\alpha\\
 &=\widetilde{X}_\alpha(s_\alpha^{\beta(2)})\widetilde{X}_\beta+\widetilde{X}_\alpha(r_\alpha^{\overline{j}(3)})\widetilde{V}_j+2r_\alpha^{\overline{j}(3)}\widetilde{X}_\alpha\widetilde{V}_j+s_\alpha^{\beta(2)}\left(\widetilde{X}_\beta\widetilde{X}_\alpha+\widetilde{X}_\alpha\widetilde{X}_\beta\right).
 \end{align*}
 and 
 $$\mathcal{P}_{22}:=\left(\widetilde{X}_\beta(s_\alpha^{\beta(2)})-\widetilde{X}_\alpha(s_\beta^{\beta(2)})+\widetilde{V}_i(r_\alpha^{\overline{i}(3)})-\widetilde{X}_\alpha(r_{\overline{i}}^{\overline{i}(2)})\right)\widetilde{X}_\alpha.$$
 Let us consider for example the last summation in the expression of $\mathcal{P}_{21}$:
 $$s_{\alpha}^{\beta(2)}\widetilde{X}_\alpha\widetilde{X}_\beta.$$
 Using  the ${\bf O}(4n)\times{\bf O}(3)$-invariance and the exponential decay of the heat kernel $p_{\mathbb{G}(q)}$ and parity arguments, we obtain the following formulas:

\begin{enumerate}
\item For $\alpha,\beta=1,\cdots,4n$ and $i=1,2,3$:
$$\int_{\mathbb{R}^{4n+3}}p_{\mathbb{G}(q)}(1-s,0,(x,z))x_\alpha x_\beta\frac{\partial}{\partial z_i}\left( p_{\mathbb{G}(q)}(s,(x,z),0)\right) dxdz=0,$$
\item  For $\alpha,\beta,\gamma,\delta=1,\cdots,4n$:
$$\int_{\mathbb{R}^{4n+3}}p_{\mathbb{G}(q)}(1-s,0,(x,z))x_\alpha x_\beta\frac{\partial}{\partial x_\gamma}\frac{\partial}{\partial x_\delta}\left( p_{\mathbb{G}(q)}(s,(x,z),0)\right) dxdz=0,$$
unless one of the following cases holds:
$$\alpha=\beta\text{ and }\gamma=\delta,\hspace{2mm}\alpha=\gamma\text{ and }\beta=\delta\text{ or }\alpha=\delta\text{ and }\beta=\gamma.$$
\item For $i=1,2,3$:
$$\int_{\mathbb{R}^{4n+3}}p_{\mathbb{G}(q)}(1-s,0,(x,z))\frac{\partial}{\partial z_i}\left( p_{\mathbb{G}(q)}(s,(x,z),0)\right) dxdz=0.$$
\item For $\alpha,\beta,\gamma,\delta=1,\cdots,4n$ and  $i,j=1,2,3$:
$$\int_{\mathbb{R}^{4n+3}}p_{\mathbb{G}(q)}(1-s,0,(x,z))x_\alpha x_\beta x_\gamma x_\delta\frac{\partial}{\partial z_i}\frac{\partial}{\partial z_j}\left( p_{\mathbb{G}(q)}(s,(x,z),0)\right) dxdz=0,$$
unless $i=j$ and one of the following cases hold:
$$\alpha=\beta\text{ and }\gamma=\delta,\hspace{2mm}\alpha=\gamma\text{ and }\beta=\delta\text{ or }\alpha=\delta\text{ and }\beta=\gamma.$$
In each case, the above integral does not depend on $i=j$ and $\alpha,\beta,\gamma,\delta$.
\end{enumerate}
Now, using the formulas cited above, the remaining terms in the expression of 
\begin{equation}\label{example}
\int_{0}^{1}\int_{\mathbb{R}^{4n+3}}p_{\mathbb{G}(q)}(1-s,0,(x,z))s_{\alpha}^{\beta(2)}\widetilde{X}_\alpha\widetilde{X}_\beta\left( p_{\mathbb{G}(q)}(s,(x,z),0)\right) dxdzds
\end{equation}

are linear combination of 
\begin{equation}\label{remaining}
R_{\alpha\beta\beta}^{\alpha}(q),I_{\alpha\beta}^{i}T_{\overline{j}\alpha}^{\beta}(q),I_{\alpha\beta}^{i}I_{\gamma\delta}^{i}R_{\alpha\beta\gamma}^{\delta}(q)\text{ and }I_{\alpha\beta}^{i}I_{\gamma\delta}^{i}R_{\gamma\alpha\beta}^{\delta}(q),
\end{equation}
where no summation over $i$ is implied in the third and fourth terms.\ \\
The first term of (\ref{remaining}) is the qc scalar curvature at $q$: $R_{\alpha\beta\beta}^{\alpha}(q)=\kappa(q)$. Furthermore, using Proposition \ref{identities} and the fact that $I_iX_\alpha=I_{\beta\alpha}^{i}X_\beta$, the remaining terms of (\ref{remaining}) can be expressed  in the form
\begin{align*}
I_{\alpha\beta}^{i}T_{\overline{j}\alpha}^{\beta}(q)&=-g(T(V_j,X_\alpha),I_i X_\alpha)=0\\
I_{\alpha\beta}^{i}I_{\gamma\delta}^{i}R_{\gamma\alpha\beta}^{\delta}(q)&=-g(R(X_\beta,I_iX_\alpha)X_\alpha,I_iX_\beta)=\frac{n\kappa(q)}{n+2},\text{ for all }i\\
I_{\alpha\beta}^{i}I_{\gamma\delta}^{i}R_{\alpha\beta\gamma}^{\delta}(q)&=-g(R(X_\alpha,I_iX_\alpha)I_iX_\beta,X_\beta)=-\frac{2n\kappa(q)}{n+2},\text{ for all }i.
\end{align*}
It follows that each non-trivial term of (\ref{remaining}) equals the qc scalar curvature at $q$ up to a constant multiple. Hence (\ref{example}) depends linearly on the curvature $\kappa(q)$ at $q$.\ \\ Similar arguments show that
\begin{align*}
\mathcal{I}_1(q):&=\int_{0}^{1}\int_{\mathbb{R}^{4n+3}}p_{\mathbb{G}(q)}(1-s,0,(x,z))\mathcal{P}_{21}\left( p_{\mathbb{G}(q)}(s,(x,z),0)\right) dxdzds\\
&=C_1(n)\kappa(q)
\end{align*}
where $C_1(n)$ is a constant depending only on $n$.

Again, similar arguments show that 
$$\mathcal{I}_2(q):=\int_{0}^{1}\int_{\mathbb{R}^{4n+3}}p_{\mathbb{G}(q)}(1-s,0,(x,z))\mathcal{P}_{22}\left( p_{\mathbb{G}(q)}(s,(x,z),0)\right) dxdzds$$
is a linear combination of the following terms:
$$R_{\alpha\beta\beta}^{\alpha}(q),I_{\alpha\beta}^{i}T_{\overline{j}\alpha}^{\beta}(q)\text{ and }T_{\overline{j}\hspace{0,5mm}\overline{i}}^{\overline{i}}(q),$$
where no summation over $i$ is implied in the third term.\ \\
Using Proposition \ref{identities} shows that the second term $I_{\alpha\beta}^{i}T_{\overline{j}\alpha}^{\beta}(q)$ and the last term $T_{\overline{j}\hspace{0,5mm}\overline{i}}^{\overline{i}}(q)$ vanish. Therefore, we conclude that the second heat invariant $c_1(q)$ depends linearly on the curvature at $q$ with coefficient depending only on $n$.
\end{proof}

For $M=\mathbb{S}^{4n+3}$ with the standard qc structure, the second coefficient $c_1$ was computed in \cite{Baud_Wang}:
$$c_1(q)=\frac{1}{(4\pi)^{2n+2}}\int_{0}^{\infty}\frac{y^{2n+2}}{(\sinh y)^{2n}}\left((2n+1)^2-\frac{2n(2n+1)(\sinh y-y\cosh y)}{y^2\sinh y}\right)dy,$$
for $q\in\mathbb{S}^{4n+3}$. Furthermore, the qc curvature $\kappa_{\mathbb{S}^{4n+3}}$ of the sphere $\mathbb{S}^{4n+3}$ is constant with value $16n(n+2)$ (see  \cite{IMV}). Hence we deduce the value of the universal constant $C_n$ in Theorem \ref{fund}: 
$$\mbox{\normalsize $C_n=\frac{1}{16(n^2+2n)(4\pi)^{2n+2}}\int_{0}^{\infty}\frac{y^{2n+2}}{(\sinh y)^{2n}}
 \left((2n+1)^2-\frac{2n(2n+1)(\sinh y-y\cosh y)}{y^2\sinh y}\right)dy.$}$$
 
In the following we give an application of Theorem \ref{fund} for the case of qc-Einstein compact manifolds. We recall that a qc-Einstein manifold $M$ is a qc manifold such that the torsion $T_V$ vanishes identically on $\mathcal{H}$. In this case, it was proven in \cite{IMV} that the qc scalar curvature $\kappa$ of such a manifold is constant. Note that if the qc manifold $M$ is compact, then the sublaplacian $\Delta_{sub}$ (which is subelliptic due to the bracket generating property of the distribution $\mathcal{H}$) has compact resolvent and thus has discrete spectrum $0=\lambda_1\leq \lambda_2 \leq \cdots \leq \lambda_k \cdots \to\infty$ only consisting of eigenvalues with finite multiplicities. Furthermore, $e^{-t\Delta_{sub}}$ is of trace class for every $t>0$.\ \\
Now, let $M$ and $M^\prime$ be two qc-Einstein compact manifolds with qc scalar curvatures $\kappa$ and $\kappa^{\prime}$, respectively. Assume that $M$ and $M^{\prime}$ are isospectral with respect to the intrinsic sublaplacians, i.e. the  associated intrinsic sublaplacians $\Delta_{sub}$ and $\Delta_{sub}^\prime$  have the same spectrum with the same multiplicities of eigenvalues. Hence we can write
$$\text{tr}(e^{-t\Delta_{sub}})=\text{tr}(e^{-t\Delta_{sub}^\prime}).$$
By Theorems \ref{fund1} and \ref{fund} we obtain
$$\dim(M)=\dim(M^\prime),\hspace{1mm}\mathcal{P}(M)=\mathcal{P}^\prime(M^\prime)\text{ and }\kappa=\kappa^\prime,$$
i.e. the dimension, the Popp volume and the qc scalar curvature of a qc-Einstein compact manifold are spectral invariants.
\begin{cor}
Let  $M$ and $M^\prime$ be two qc-Einstein compact manifolds, which are isospectral with respect to the intrinsic sublaplacians. Then they have the same dimension, Popp volume and qc scalar curvature.
\end{cor}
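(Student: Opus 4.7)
The plan is to extract the three geometric invariants from the small-time asymptotic expansion of the heat traces of the two isospectral intrinsic sublaplacians, by integrating the pointwise on-diagonal expansion over the compact manifolds and matching coefficients. Isospectrality with identical multiplicities gives $\text{tr}(e^{-t\Delta_{sub}}) = \text{tr}(e^{-t\Delta_{sub}^\prime})$ for every $t>0$, and
\begin{equation*}
\text{tr}(e^{-t\Delta_{sub}}) = \int_M p(t,q,q)\,d\mathcal{P}(q).
\end{equation*}
Uniformity of the asymptotics of $c_0(q)$ and $c_1(q)$ on the compact $M$ justifies integrating Theorems \ref{fund1} and \ref{fund} term by term. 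Since $M$ is qc-Einstein, the qc scalar curvature $\kappa$ is constant (by the Ivanov--Minchev--Vassilev result recalled in the excerpt), so
\begin{equation*}
\text{tr}(e^{-t\Delta_{sub}}) = \frac{1}{t^{2n+3}}\Bigl(c_0\,\mathcal{P}(M) + C_n\,\kappa\,\mathcal{P}(M)\,t + o(t)\Bigr)\quad\text{as } t\downarrow 0,
\end{equation*}
where $\dim M = 4n+3$, and the analogous expansion holds for $M^\prime$ with $n^\prime, \kappa^\prime, \mathcal{P}^\prime(M^\prime)$ in place of $n, \kappa, \mathcal{P}(M)$.

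Equality of the two expansions is then exploited in three successive steps. First, matching the leading singular exponent in $t$ gives $2n+3 = 2n^\prime+3$, hence $\dim M = \dim M^\prime$. Second, with $n = n^\prime$ now fixed, the universal constant $c_0 = c_0(n)$ of Theorem \ref{fund1} is the same on both sides, so comparing the leading coefficients yields $\mathcal{P}(M) = \mathcal{P}^\prime(M^\prime)$. Third, comparing the next-order coefficients $C_n\,\kappa\,\mathcal{P}(M) = C_n\,\kappa^\prime\,\mathcal{P}^\prime(M^\prime)$ and cancelling the previously established equality of Popp volumes gives $\kappa = \kappa^\prime$.

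The only step that is not purely formal is verifying $C_n \neq 0$, which I expect to be the one substantive subtlety, since otherwise the final cancellation fails. To handle this, I would appeal to the explicit computation on the standard qc sphere $\mathbb{S}^{4n+3}$ displayed just after Theorem \ref{fund}: there $\kappa$ is the nonzero constant $16n(n+2)$, and the explicit integral formula for $c_1$ is nonzero (a direct check shows the integrand is positive for all $y>0$, since the parenthesized factor has positive limits at $0$ and $\infty$ and never vanishes). The identity $c_1 = C_n\,\kappa$ on the sphere then forces $C_n \neq 0$, which closes the argument.
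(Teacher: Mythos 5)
Your proposal is correct and follows essentially the same route as the paper: equate the heat traces, integrate the on-diagonal expansions from Theorems \ref{fund1} and \ref{fund} over the compact manifolds, and match the exponent and the first two coefficients in succession. Your explicit verification that $C_n\neq 0$ (via the displayed integral formula, whose integrand is indeed positive since $\sinh y - y\cosh y<0$ for $y>0$) is a point the paper leaves implicit but which is genuinely needed for the final cancellation.
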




 
\end{document}